\newcommand{\ch}{\;{\rm ch}}
\newcommand{\sh}{\;{\rm sh}}
\newtheorem{thm}{\rm\bf Theorem}[section]
\newtheorem{prop}[thm]{\rm\bf Proposition}
\newtheorem{cor}[thm]{\rm\bf Corollary}
\newtheorem{rem}[thm]{\rm\bf Remark}
\renewcommand\footnotemark{} 
\begin{document}

\title{\bf Minimum energy exact null-controllability problem for linear time-delay equations
\thanks{The author was supported by the Norwegian Research Council project ''COMAN'' No. 275113.}
}
\author{Pavel Barkhayev}
%
\date{}

\maketitle

\begin{abstract}
We study the minimum energy null-controllability problem for differential equations with point-wise delays.	
For the equations of both neutral and retarded type we reduce the problem of finding the optimal control
to a Volterra integral equation and solve it explicitly.	
We prove that for any initial state and any controllability time the corresponding optimal control belongs to the characteristic space generated by the equation's exponentials.
Besides, we show that the proposed approach can be applied to the systems of retarded equations with one delay term.
\end{abstract}

\section{Introduction}

We consider linear time-delay control equations with point-wise delays:
\begin{equation}\label{eq_intro_equation_neutral}
\dot x(t) + \sum_{k=1}^N d_{k} \dot{x}(t-r_k) = \sum_{k=0}^N a_{k} x(t-r_k) + u(t), \quad t\ge 0, 
\end{equation}
here $x(t)\in \mathbb{R}$ is the state, $u(t)\in \mathbb{R}$ is the control, 
$0=r_0< r_1<\ldots<r_N=1$ are delays, 
$d_k, a_{k}\in \mathbb{R}$ are constant coefficients, 
and we assume that $d_N^2+a_N^2\not=0$. 

It is well known (see e.g.~\cite{Burns_Herdman_Stech_1983}) that the equation (\ref{eq_intro_equation_neutral}) admits a unique continuous solution $x(t)$ for every control function $u\in L^2_{loc}(0,+\infty)$ and every initial  state
$x(t)=x_0(t)$, $t\in [-1,0]$, 
where $x_0\in W^{1,2}(-1,0)$.
Besides, if (\ref{eq_intro_equation_neutral}) is retarded equation, i.e $d_k=0$ for any $k=\overline{1,N}$, then it admits the unique continuous solution $x(t)=x(t; y, x_0, u(t))$ even for initial states of the form
\begin{equation}\label{eq_intro_initial}
\left\{
\begin{array}{l}
x(0)=y,\\  
x(t)=x_0(t),\; t\in [-1,0),
\end{array} 
\right.
\end{equation}
where $(y, x_0)\in \mathbb{R}\times L^2(-1,0) \equiv M^2$ (see e.g.~\cite{Delfour_1980}). Further we study the initial value problem in the form~(\ref{eq_intro_equation_neutral})--(\ref{eq_intro_initial}) assuming that $(y, x_0)\in \widehat{M}^2 \equiv \{(y, x_0): x_0\in W^{1,2}(-1,0), y=x_0(0)\}$ if the equation~(\ref{eq_intro_equation_neutral}) is not retarded.

An initial state $(y, x_0)$ is called null-controllable at time $T$ by means of the equation (\ref{eq_intro_equation_neutral}) if there exists a control $u\in L^2[0,T]$ 
such that  $x(t; y, x_0, u(t))\equiv 0$ for $t\in [T-1, T]$.
We refer to such controls $u(\cdot)$ as admissible from $(y, x_0)$ at time $T$ and denote the set of all admissible controls as $\mathcal{U}_T(y, x_0)$.
The equation (\ref{eq_intro_equation_neutral}) 
is called exactly null-controllable at time $T$ if $\mathcal{U}_T(y, x_0)\not=\emptyset$ for any initial state $(y, x_0)$.

In the present paper we solve the minimum energy optimal control problem
\begin{equation}\label{eq_intro_min_en_pr}
\|u\|_{L^2(0,T)} \rightarrow \min, \quad u\in \mathcal{U}_T(y, x_0),
\end{equation}
and investigate properties of the constructed optimal controls $\widehat{u}_T(t; y,x_0)$.
For this we first construct explicitly the sets of admissible controls $\mathcal{U}_T(y, x_0)$.

It is worth to emphasize that the behavior of solutions of the equation~(\ref{eq_intro_equation_neutral})  may essentially vary  depending on its coefficients.
As we have already mentioned the sets of null-controllable states of retarded equations are wider comparing to equations with delays in derivatives.
Besides, only smooth terminal states may be reached by means of retarded equations, however, if equation is neutral, i.e.  $d_N\not=0$, every state transferable to null can be reached from null as well, i.e.  the problems of exact controllability to and from the null state are equivalent.
Finally, below we compare smoothness of the optimal controls in various cases.

The problem of exact controllability has been investigated in details in many papers. We refer e.g. to \cite{Banks_Jacobs_Langenhop_1975,Jacobs_Langenhop_1976,Yamamoto_1989,Rabah_Sklyar_2007,Rabah_Sklyar_Barkhayev_2016} and references therein for null-controllability analysis of the neutral type systems, and to \cite{Manitius_Triggiani_1978,Salamon_1984,Colonius_1984,Olbrot_Pandolfi_1988} for the systems of retarded type.

If an initial state is null-controllable then there exist many admissible controls.
This naturally leads to the problem of finding the optimal control having the smallest possible energy.
The optimal control problems were investigated in various settings by many authors, we mention e.g.
\cite{Krasovskii_1962,Halanay_1968,Delfour_optimal_1984,Ito_Tarn_1985,Delfour_optimal_1986,Lee_Yung_1996,Boccia_Vinter_2017}.
One of the most popular methods of investigation is Pontryagin's maximum principle which can be applied to a rather general class of problems, but which, often, can give only a partial characterization of the optimal solution.
However, the form of the equation (\ref{eq_intro_equation_neutral}) allows us to construct explicitly the set of all admissible controls and the corresponding trajectories, and then apply more specific methods to solve the  problem~(\ref{eq_intro_min_en_pr}).

First, for any initial state $(y, x_0)$ and $T>1$  we describe the set of all admissible controls $\mathcal{U}_T(y, x_0)$ in $L^2(0,T)$. Every admissible control is determined by some corresponding function in $L^2(0, T-1)$, which we call the control generator, this allows us to rewrite (\ref{eq_intro_min_en_pr}) 
as the problem on a smaller interval. 
We reduce the latter problem to a Volterra integral equation and give its explicit solution by using the Laplace transform method. Smoothness of the solutions depends on the type of the initial equation: $L^{2}$ in the general case, $W^{1,2}$ for retarded equations, and even $W^{2,2}$ for $N=1$.

Properties of solutions of the equation (\ref{eq_intro_equation_neutral}) are determined by its characteristic function
\begin{equation}\label{eq_intro_characteristic}
D(z) = iz {\rm e}^{iz} + \sum_{k=1}^N d_{k} iz {\rm e}^{i(1-r_k) z} - \sum_{k=0}^N a_{k} {\rm e}^{i(1-r_k) z}.
\end{equation}
We denote the zeros of $D(z)$ as $\{z_k\}_{k\in\mathbb{Z}}$ and consider the corresponding system of exponentials $\{ {\rm e}^{iz_k t} \}_{k\in\mathbb{Z}}$ on the interval $[0,T]$.
For simplicity we assume that $D$ does not have multiple zeros.
One can easily see that for $T<1$ this system is infinitely redundant in $L^2(0,T)$, it has excess $1$ in $L^2(0,1)$, and for $T > 1$ it has infinite deficiency; 
and also for $T > 1$ it is minimal (see e.g. \cite{Levin_1996}) in its closure 
\begin{equation}\label{eq_intro_ET}
E_T=\overline{ {\rm Lin}\{{\rm e}^{iz_k t}, {k\in\mathbb{Z}}\}}\subset L^2(0,T).
\end{equation}
In what follows we call $E_T$ the characteristic space of the equation (\ref{eq_intro_equation_neutral}) corresponding to time $T$.
We note that in the case of neutral equations ($d_N\not=0$) the zeros  $\{z_k\}_{k\in\mathbb{Z}}$ belong to a horizontal strip  of the complex plane and the exponentials $\{ {\rm e}^{iz_k t} \}_{k\in\mathbb{Z}}$ might form a Riesz basis of $E_T$, while if $d_N=0$ then the set $\{z_k\}_{k\in\mathbb{Z}}$ belongs to some half-plane $\{z:\: \Im z\ge c\}$ and the set $\{ {\rm e}^{iz_k t} \}_{k\in\mathbb{Z}}$ is only minimal in $E_T$.
This allows us to apply the techniques and methods of non-harmonic Fourier series to study the optimal solutions.

We prove that the optimal controls $\widehat{u}_T(t)$ possess an important feature: for any initial state $(y,x_0)$ the corresponding minimum energy control  belongs to $E_T$ after the symmetric change of time
\begin{equation}\label{eq_intro_charspace}
\widehat{u}_T(T-t; y,x_0)\in E_T.
\end{equation}

Finally, we apply the proposed method to the minimum energy problem for vector time-delay retarded systems with one  delay term
\begin{equation}\label{eq_intro_equation_system}
\dot{\mathbf x}(t)=A {\mathbf x}(t-1) + {\mathbf b} u(t), \quad t\ge 0, 
\end{equation}
here ${\mathbf x}(t) \in \mathbb{R}^n$ is the vector state, $u(t)\in \mathbb{R}$ is the control, $A\in\mathbb{R}^{n \times n}$, ${\mathbf b}\in\mathbb{R}^{n}$. 
The matrix function $D(z)=iz {\rm e}^{iz} I - A$ is the characteristic matrix of the system and we assume that the so-called spectral controllability condition holds:
\begin{equation}\label{eq_intro_equation_spectral_controllavility} 
{\rm rank} (D(z), {\mathbf b}) = n, \quad \mbox{ for any } z\in\mathbb{C}.
\end{equation}
Under this condition the scalar control $u(\cdot)$ applied along the fixed direction $\mathbf b$ actually is redistributed along other directions. This assures null-controllability of the system~(\ref{eq_intro_equation_system}) (see e.g.~\cite{Colonius_1984}).
We are interested in possibility of controlling a vector system by a scalar function. Respectively, it is natural to expect that the null-controllability time cannot be smaller than $n$ in the general settings.
We construct the admissible controls explicitly, what, in particular, gives that an initial state can be null-controllable for any time $T=n+\varepsilon$, $\varepsilon>0$
and, in general, cannot be controllable for the time $T=n$ or smaller.
We show that the minimum energy problem may be reduced to a Volterra integral equation and solve it.

The paper is organized as follows. In Section~2 we give the description of the admissible controls.
In Section~3 we solve the minimum energy problem. In Section~4 we show that the optimal controls belong to the characteristic space $E_T$. To make the explanations more clear, we begin each section with the analysis for the simplest retarded equation with one delay term ($N=1$ and $a_0=0$), then we consider the general equation (\ref{eq_intro_equation_neutral}),
and finally we analyze the case of the vector systems~(\ref{eq_intro_equation_system}).

\section{Admissible controls}

Every admissible control $u\in \mathcal{U}_T(y, x_0)$ first steers the trajectory to zero: $x(T-1; y, x_0, u)=0$ and then keeps it there during the time period of length $1$: $\dot{x}(t; y, x_0, u) = 0$, $t\in[T-1, T]$.

\begin{rem}
	We note that controllability time $T$ for the equations (\ref{eq_intro_equation_neutral}) is greater than $1$. 
	For $T=1$ the set $\mathcal{U}_T(y, x_0)$  is non-empty only if $y=0$ which is not the general case. 
\end{rem}
Similarly the controllability time $T$ for the system~(\ref{eq_intro_equation_system}) is greater than $n$.
\begin{rem}
If a state $(y, x_0)$ is null-controllable at some time $T_0$ then it is null-controllable at any $T>T_0$ (one can put $u(t)=0$, $t\in [T_0, T]$).	
\end{rem}
So further we may assume  for the equations (\ref{eq_intro_equation_neutral}) that $1<T<1+r_1$, where $r_1$ is the smallest delay, and for the systems~(\ref{eq_intro_equation_system}) that $n<T<n+1$.
The form of the equations~(\ref{eq_intro_equation_neutral}) and~(\ref{eq_intro_equation_system})
allows to construct admissible controls explicitly. 
 
In this section we construct the admissible controls for the simplest retarded equation~(\ref{eq_intro_equation_simplest}), then for the general equation~(\ref{eq_intro_equation_neutral}),\
and finally for the vector system~(\ref{eq_intro_equation_system}).

\paragraph{One delay term retarded equations.}

We consider the equation
\begin{equation}\label{eq_intro_equation_simplest} 
\dot{x}(t)=a_1 x(t-1) + u(t), \quad t\ge 0, a_1\in\mathbb{R},
\end{equation}
fix an arbitrary $\varepsilon$: $0<\varepsilon<1$ and $(y, x_0)\in M^2$, and describe the admissible set $\mathcal{U}_T(y, x_0)$, $T=1+\varepsilon$.

For a fixed control $u(t)$ the trajectory $x(t)=x(t; y, x_0, u(t))$ of the initial-value problem (\ref{eq_intro_equation_simplest}),(\ref{eq_intro_initial}) is of the form:
\begin{equation}\label{eq_admissible_trajectory_simplest}
x(t) = \widetilde{x}(t) + \int_0^t  u(\tau) \: {\rm d}\tau, \quad t\in(0,1)
\end{equation}
where $\widetilde{x}(t)$ denotes the trajectory of the equation without input ($u(t)\equiv 0$): 
$$\widetilde{x}(t) = x(t; y, x_0, 0) = y+ a_1 \int_0^t x_0(\tau-1)\: {\rm d}\tau.
$$

The condition $x(T-1)=0$ for the equation (\ref{eq_intro_equation_simplest}) takes the form
\begin{equation}\label{eq_admissible_moments_simplest}
\int_0^\varepsilon u(\tau)\: {\rm d}\tau = - \widetilde{x}(\varepsilon),
\end{equation}
and $\dot{x}(t) = 0$, $t\in[T-1, T]$ is equivalent to $u(t) = - a_1 x(t - 1)$.
This gives us an explicit representation of the admissible controls.
\begin{prop} Every admissible control $u\in \mathcal{U}_T(y, x_0)$, $(y, x_0)\in M^2$ of the equation (\ref{eq_intro_equation_simplest}) is of the form
	\begin{equation}\label{eq_admissible_control_simplest}
	u(t)=\left\{
	\begin{array}{ll}
	u_0(t), & t\in [0,\varepsilon),\\
	- a_1 x_0(t - 1), & t\in [\varepsilon,1),\\
	-a_1 \left( \widetilde{x}(t-1) + \int\limits_0^{t-1} u_0(\tau) \: {\rm d}\tau \right), & t\in [1, 1+\varepsilon),
	\end{array}
	\right.
	\end{equation}
	here the control generator $u_0(\cdot)$ is any function in $L^2(0,\varepsilon)$ satisfying (\ref{eq_admissible_moments_simplest}).
\end{prop}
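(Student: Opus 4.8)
The plan is to combine the two admissibility conditions already recorded before the statement — namely $x(\varepsilon)=0$ together with $\dot x(t)=0$ on $(\varepsilon,1+\varepsilon)$ — and to read off $u$ on each of the three subintervals of $[0,1+\varepsilon)$ dictated by the delay structure. The whole argument is driven by causality: on $(\varepsilon,1+\varepsilon)$ the equation forces $u(t)=-a_1 x(t-1)$, so $u$ there is completely determined by the trajectory one unit of time earlier, and the only genuinely free data is $u_0:=u|_{[0,\varepsilon)}$, which is unconstrained on $[0,\varepsilon)$ except for the single moment condition coming from the steering requirement.

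First I would treat the middle branch. For $t\in[\varepsilon,1)$ the argument $t-1$ lies in $[\varepsilon-1,0)\subset[-1,0)$, so $x(t-1)=x_0(t-1)$ is fixed by the initial state; hence $u(t)=-a_1 x_0(t-1)$, which is the second line of (\ref{eq_admissible_control_simplest}), and it lies in $L^2(\varepsilon,1)$ because $x_0\in L^2(-1,0)$. Next I would treat the last branch. For $t\in[1,1+\varepsilon)$ the argument $t-1$ lies in $[0,\varepsilon)$, where $u=u_0$, so $x(t-1)$ is given by the trajectory formula (\ref{eq_admissible_trajectory_simplest}); substituting $s=t-1$ yields $x(t-1)=\widetilde x(t-1)+\int_0^{t-1}u_0(\tau)\,{\rm d}\tau$ and therefore the third line of (\ref{eq_admissible_control_simplest}).

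Then I would extract the constraint on $u_0$. Since $x(\varepsilon)=\widetilde x(\varepsilon)+\int_0^\varepsilon u(\tau)\,{\rm d}\tau=\widetilde x(\varepsilon)+\int_0^\varepsilon u_0(\tau)\,{\rm d}\tau$ by (\ref{eq_admissible_trajectory_simplest}), the steering condition $x(\varepsilon)=0$ is exactly (\ref{eq_admissible_moments_simplest}). This completes the forward direction: every admissible $u$ necessarily has the form (\ref{eq_admissible_control_simplest}) with $u_0$ subject to (\ref{eq_admissible_moments_simplest}).

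Finally I would check the converse: given any $u_0\in L^2(0,\varepsilon)$ satisfying (\ref{eq_admissible_moments_simplest}), the function $u$ defined by (\ref{eq_admissible_control_simplest}) is admissible. This is where the one point requiring care appears — one must verify that there is no circular dependence among the branches. The value $x(\varepsilon)$ depends only on $u$ on $[0,\varepsilon)$, that is on $u_0$, and not on the values prescribed on $[\varepsilon,1)$; likewise the last branch on $[1,1+\varepsilon)$ depends only on $x$ restricted to $[0,\varepsilon)$, hence again only on $u_0$. Thus the prescription is self-consistent, $u\in L^2(0,1+\varepsilon)$ (the third branch being continuous, hence square-integrable), and by construction $x(\varepsilon)=0$ and $\dot x\equiv 0$ on $(\varepsilon,1+\varepsilon)$, so $x\equiv 0$ on $[\varepsilon,1+\varepsilon]=[T-1,T]$. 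I expect this consistency/causality verification to be the only non-mechanical part of the argument; everything else reduces to direct substitution into (\ref{eq_admissible_trajectory_simplest}).
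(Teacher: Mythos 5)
Your proposal is correct and follows essentially the same route as the paper: the representation is read off from the steering condition $x(\varepsilon)=0$ (which via the trajectory formula (\ref{eq_admissible_trajectory_simplest}) is exactly the moment equality (\ref{eq_admissible_moments_simplest})) together with $\dot x(t)=0$ on $[T-1,T]$, i.e.\ $u(t)=-a_1x(t-1)$, evaluated branchwise according to whether $t-1$ falls in the initial interval or in $[0,\varepsilon)$. The only addition is your explicit converse/causality check, which the paper leaves implicit, and it is carried out correctly.
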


\begin{rem}
From (\ref{eq_admissible_control_simplest}) it follows that 
$$\mathcal{U}_T(y, x_0)\cap \mathcal{U}_T(\widetilde{y}, \widetilde{x}_0) = \emptyset \quad \mbox{ for any } (y, x_0)\not= (\widetilde{y}, \widetilde{x}_0).$$
\end{rem}

\begin{rem}
The subspace of all admissible controls $\mathcal{U}_T = \mathop{\bigcup}_{(y, x_0)\in M^2} U_T(y, x_0)$ allows the characterization
$$
\mathcal{U}_T = L^2(0,1)\times \hat{W}^{1,2}(1,1+\varepsilon),
$$
where $\hat{W}^{1,2}(1,1+\varepsilon) = \{w\in W^{1,2}(1,1+\varepsilon): \: w(1+\varepsilon) = 0\}$.
\end{rem}
Indeed, any $(v, w)\in L^2\times \hat{W}^{1,2}$ defines $u(t)=\left\{ \begin{array}{ll}
v(t), & t\in [0,1)\\
w(t), & t\in [1, 1+\varepsilon]
\end{array} \right.$ in $\mathcal{U}_T(y, x_0)$ with $(y, x_0)\in M^2$ uniquely determined as
$$
\begin{array}{l}
y = \frac{1}{a_1} u(1),\\
x_0(t) = \left\{ \begin{array}{ll}
- \frac{1}{a_1^2} u'(t+2) - \frac{1}{a_1}  u(t+1), & t\in[-1, -1+\varepsilon),\\
-\frac{1}{a_1}  u(t-1), & t\in[-1+\varepsilon, 0).
\end{array}
\right.
\end{array}
$$

\paragraph{Neutral equations of the general form.}

We fix an arbitrary $\varepsilon$ in the interval $0<\varepsilon<r_1$  and $(y, x_0)\in \hat{M}^2$, and construct the admissible controls $u\in \mathcal{U}_T(y, x_0)$, $T=1+\varepsilon$. 
For a fixed control $u(t)$ and $t\in[0,r_1]$ the trajectory $x(t)=x(t; y, x_0, u(t))$ of the initial-value problem (\ref{eq_intro_equation_neutral})--(\ref{eq_intro_initial}) is of the form:
\begin{equation}\label{eq_admissible_trajectory}
x(t) = \widetilde{x}(t) + \int_0^t {\rm e}^{a_0(t-\tau) } u(\tau) \: {\rm d}\tau,
\end{equation}
where $\widetilde{x}(t)$ is the trajectory of the equation without control ($u(t)\equiv 0$):
\begin{equation}\label{eq_admissible_trajectory_nocontrol}
\widetilde{x}(t) = {\rm e}^{a_0 t}\left(y + \int_0^t {\rm e}^{-a_0 \tau} 
\sum_{k=1}^N [a_k x_0(\tau - r_k) - d_k \dot{x}_0(\tau - r_k)] \: {\rm d}\tau \right).
\end{equation}

From the explicit form (\ref{eq_admissible_trajectory}) of the trajectory 
we get that only smooth initial states  $(y,x_0)\in \widehat{M}^2$ can be null-controllable in case of the neutral equation (\ref{eq_intro_equation_neutral}).
Every admissible control steers the trajectory to zero: $x(\varepsilon)=0$, this may be written as
\begin{equation}\label{eq_admissible_moments_neutral}
\int_0^\varepsilon {\rm e}^{a_0 (t-\tau)}  u(\tau)\: {\rm d}\tau = -\widetilde{x}(\varepsilon).
\end{equation}
The condition $\dot{x}(t) = 0$, $t\in[\varepsilon, 1+\varepsilon]$ is equivalent to 
\begin{equation}\label{eq_admissible_control_neutral}
u(t) = \sum\limits_{k=1}^N \left[d_k \dot{x}(t - r_k)- a_k x(t - r_k)\right] - a_0 x(t), \quad t\in [\varepsilon, 1+ \varepsilon].
\end{equation}
This means that if $t\in [r_{s-1}+\varepsilon,r_s)$, $s=\overline{1,N}$, then $u(t)$ depends on the initial state only:
\begin{equation}\label{eq_admissible_control_neutral_psi}
u(t)=\sum\limits_{k=s}^N [d_k \dot{x}_0(t - r_k) - a_k x_0(t - r_k)]
\equiv \psi_s(t-r_s),
\end{equation}
and if $t\in [r_s, r_s+\varepsilon)$, $s=\overline{1,N}$ then also it depends on the control on the interval $[0,\varepsilon)$:
$$
\begin{array}{rcl}
u(t) & = & d_s \dot{x}(t-r_s) - a_s x(t-r_s) + \sum\limits_{k=s+1}^N[d_k \dot{x}_0(t - r_k) - a_k x_0(t - r_k)] \\
 & = & d_s u_0(t-r_s) + (d_s a_0 - a_s) \int\limits_0^{t-r_s} {\rm e}^{a_0(t -r_s-\tau)} u_0(\tau)\: {\rm d}\tau + \varphi_s(t-r_s),
\end{array}
$$
where 
\begin{equation}\label{eq_admissible_control_neutral_varphi}
\begin{array}{rcl}
\varphi_s(t-r_s) & = & (d_s a_0 - a_s) \widetilde{x}(t-r_s) + \sum\limits_{k=s+1}^N[d_k \dot{x}_0(t - r_k) - a_k x_0(t - r_k)] \\
&  &  -  d_s \sum\limits_{k=1}^N \left[d_k \dot{x}(t -r_s - r_k)- a_k x(t -r_s - r_k)\right].
\end{array}
\end{equation}
Thus we obtain an explicit form of the admissible controls.
\begin{prop} Every admissible control $u\in \mathcal{U}_T(y, x_0)$, $(y, x_0)\in \widehat{M}^2$  of the equation~(\ref{eq_intro_equation_neutral}) is of the form
	\begin{equation}\label{eq_admissible_control2_neutral}
	u(t)=\left\{
	\begin{array}{ll}
	u_0(t), & t\in [0,\varepsilon),\\
	\psi_s(t-r_s), & t\in [r_{s-1}+\varepsilon,r_s), s=\overline{1,N} \\
	\varphi_s(t-r_s) + d_s u_0(t-r_s)  \\
	\quad + (d_s a_0 - a_s) \int\limits_0^{t-r_s} {\rm e}^{a_0(t -r_s-\tau)} u_0(\tau)\: {\rm d}\tau, & t\in [r_s, r_s+\varepsilon), s=\overline{1,N},
	\end{array}
	\right.
	\end{equation}
	here the control generator $u_0(\cdot)$ is any function in $L^2(0,\varepsilon)$ satisfying  (\ref{eq_admissible_moments_neutral}) and the functions $\psi_s(\cdot)$, $\varphi_s(\cdot)$ are given by (\ref{eq_admissible_control_neutral_psi}) and (\ref{eq_admissible_control_neutral_varphi}) respectively.
\end{prop}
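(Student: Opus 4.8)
The plan is to derive the formula (\ref{eq_admissible_control2_neutral}) directly from the two defining properties of an admissible control: that it steers the trajectory to zero at time $T-1=\varepsilon$ and keeps it there throughout $[\varepsilon,1+\varepsilon]$. First I would record the trajectory representation (\ref{eq_admissible_trajectory})--(\ref{eq_admissible_trajectory_nocontrol}), valid on $[0,r_1]$: since every delayed argument $t-r_k$ with $k\ge 1$ is negative there, the equation (\ref{eq_intro_equation_neutral}) reduces to a scalar ODE whose right-hand side is known from the initial data $(y,x_0)$ and the control, and variation of constants yields the stated formula with $\widetilde{x}$ as in (\ref{eq_admissible_trajectory_nocontrol}). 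Evaluating at $t=\varepsilon$ and imposing $x(\varepsilon)=0$ produces the moment constraint (\ref{eq_admissible_moments_neutral}) on the restriction $u_0:=u|_{[0,\varepsilon)}$; this is the only condition the control generator must satisfy.

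For the second requirement I would use that $\dot{x}(t)=0$ on $[\varepsilon,1+\varepsilon]$ together with $x(\varepsilon)=0$ forces $x\equiv 0$ there. Substituting $\dot{x}(t)=0$ into (\ref{eq_intro_equation_neutral}) and solving algebraically for $u(t)$ gives the pointwise expression (\ref{eq_admissible_control_neutral}). The piecewise structure of (\ref{eq_admissible_control2_neutral}) then results from a case analysis on where the delayed arguments $t-r_k$ fall. For $t\in[r_{s-1}+\varepsilon,r_s)$ each such argument is either below $0$, so that $x(t-r_k),\dot{x}(t-r_k)$ are the initial data $x_0,\dot{x}_0$, or at least $\varepsilon$, so that $x(t-r_k)=\dot{x}(t-r_k)=0$; this leaves only the terms with $k\ge s$ and yields $\psi_s$ as in (\ref{eq_admissible_control_neutral_psi}). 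For $t\in[r_s,r_s+\varepsilon)$ exactly one argument, namely $t-r_s$, lands in the controlled window $[0,\varepsilon)$; here I would substitute the trajectory formula (\ref{eq_admissible_trajectory}) for $x(t-r_s)$ and express $\dot{x}(t-r_s)$ either by differentiating (\ref{eq_admissible_trajectory}) or by re-using the equation at time $t-r_s$. Collecting the result separates into the explicit term $d_s u_0(t-r_s)$, the exponential convolution with weight $(d_s a_0-a_s)$, and the remaining initial-data contributions, which are precisely $\varphi_s$ in (\ref{eq_admissible_control_neutral_varphi}).

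Finally I would verify the converse inclusion: any $u_0\in L^2(0,\varepsilon)$ obeying (\ref{eq_admissible_moments_neutral}), fed into (\ref{eq_admissible_control2_neutral}), generates a control for which $x(\varepsilon)=0$ and $\dot{x}\equiv 0$ on $[\varepsilon,1+\varepsilon]$, hence $u\in\mathcal{U}_T(y,x_0)$; since every admissible control restricts on $[0,\varepsilon)$ to such a $u_0$, the formula describes the whole set.

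I expect the bookkeeping in the $t\in[r_s,r_s+\varepsilon)$ branch to be the main obstacle. The delicate point is that expressing $\dot{x}(t-r_s)$ through the equation reintroduces doubly-delayed values $x(t-r_s-r_k)$, which must be correctly identified (here they coincide with the initial data, since $t-r_s-r_k<0$) and reassembled into $\varphi_s$. One must also confirm that the listed subintervals genuinely tile $[\varepsilon,1+\varepsilon]$, which rests on $\varepsilon$ being small relative to the spacing of the delays. Keeping the distinction between the three regimes --- initial data, controlled window, and already-zeroed --- consistent across all $s$ is where an error is most likely to creep in.
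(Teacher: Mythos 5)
Your proposal is correct and is essentially the paper's own argument: variation of constants on $[0,r_1]$ yielding (\ref{eq_admissible_trajectory}), the moment condition (\ref{eq_admissible_moments_neutral}) from $x(\varepsilon)=0$, solving (\ref{eq_intro_equation_neutral}) for $u$ under $\dot x\equiv 0$ on $[\varepsilon,1+\varepsilon]$ to obtain (\ref{eq_admissible_control_neutral}), and the same case analysis on where the delayed arguments $t-r_k$ fall, producing $\psi_s$ and $\varphi_s$ exactly as in (\ref{eq_admissible_control_neutral_psi}) and (\ref{eq_admissible_control_neutral_varphi}). Your two additions --- the explicit converse verification and the caveat that the subintervals tile $[\varepsilon,1+\varepsilon]$ only when $\varepsilon$ does not exceed the gaps $r_s-r_{s-1}$ (the paper assumes only $\varepsilon<r_1$) --- are sound refinements of the same route rather than a different approach.
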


\begin{rem}
	For general retarded equations ($d_k=0$, $k=\overline{1,N}$) any initial state $(y,x_0)\in M^2$ is null-controllable and the admissible controls $u\in \mathcal{U}_T(y, x_0)$ are of the form:
	\begin{equation}\label{eq_admissible_control_retarded}
	u(t)=\left\{
	\begin{array}{ll}
	u_0(t), & t\in [0,\varepsilon),\\
	- \sum\limits_{k=s}^N a_k x_0(t - r_k), & t\in [r_{s-1}+\varepsilon,r_s), s=\overline{1,N} \\
	-a_s x(t-r_s) - \sum\limits_{k=s+1}^N a_k x_0(t - r_k), & t\in [r_s, r_s+\varepsilon), s=\overline{1,N},
	\end{array}
	\right.
	\end{equation}
	where $u_0(\cdot)\in L^2(0,\varepsilon)$ satisfies (\ref{eq_admissible_moments_neutral}).
\end{rem}

\paragraph{System of retarded equations.} 

For systems (\ref{eq_intro_equation_system}) null-controllability is equivalent to spectral controllability (\ref{eq_intro_equation_spectral_controllavility}) (see e.g. \cite{Colonius_1984}).
The latter condition is equivalent to controllability of the pair $(A,{\mathbf b})$, 
i.e. to the condition ${\rm rank}({\mathbf b}, A{\mathbf b}, \ldots, A^{n-1}{\mathbf b})=n$. 
This implies existence of a nonsingular space transformation $G\in\mathbb{R}^{n\times n}$ such that
\begin{equation}\label{eq_admissible_change_system}
GAG^{-1} = \left(
\begin{array}{cccc}
-g_1 & \ldots & -g_{n-1}  & -g_n \\
1 & \ldots  & 0 & 0 \\
\vdots & \ddots & \vdots  & \vdots \\
0 & \ldots & 1 & 0 
\end{array}
\right),
\qquad
G{\mathbf b}=
\left(
\begin{array}{c}
1 \\
0 \\
\vdots\\
0 
\end{array}
\right).
\end{equation}
Thus, without loss of generality, we may assume the matrix $A$ and vector ${\mathbf b}$ are of the form (\ref{eq_admissible_change_system}), i.e. (\ref{eq_intro_equation_system}) is of the form
\begin{equation}\label{eq_admissible_changed_system}
\left\{
\begin{array}{rcl}
\dot{x}_1(t) & = & -\sum\limits_{k=1}^N g_k x_k(t-1) + u(t), \\
\dot{x}_2(t) & = & x_1(t-1),\\
 & \ldots & \\
\dot{x}_n(t) & = & x_{n-1}(t-1),
\end{array}
\right.
\qquad
t\ge 0.
\end{equation}

In \cite{Delfour_1980} existence of the unique continuous solution of (\ref{eq_admissible_changed_system})  was shown 
for any control function $u\in L^2_{loc}(0,+\infty)$ and for any initial state
\begin{equation}\label{eq_admissible_initial_state_system}
\left\{
\begin{array}{l}
{\mathbf x}(0) = (y_1,\ldots, y_n)\in \mathbb{R}^n\\
{\mathbf x}(t) = (x^0_1(t), \ldots, x^0_n(t)) \in L^2([-1,0]; \mathbb{R}^n).
\end{array}
\right.
\end{equation}

We note that, in the general settings, any initial state (\ref{eq_admissible_initial_state_system}) cannot be null-controllable for the time $T=n$ or smaller.
Indeed, the component $x_1(t)$ can be null-controllable for any time $T>1$, however $x_2(t)$ is determined by the history $x_1^0(t)$ and $x_2^0(t)$ on the interval $[-1,1]$, thus it cannot be controllable for time less or equal to $2$. Continuing this reasoning we see that the component $x_n(t)$ is determined by the history $x_1^0(t), \ldots, x_n^0(t)$ on the interval $[-1,(n-1)]$, so it cannot be controllable for time less or equal to $n$.

However, for any $\varepsilon>0$ an arbitrary initial state can be null-controllable at time $T=n+\varepsilon$.
Indeed, we can construct the admissible controls as 
\begin{equation}\label{eq_admissible_control2_system}
u(t)=\left\{
\begin{array}{ll}
u_0(t), & t\in [0,\varepsilon),\\
\sum\limits_{k=1}^N g_k x_k(t-1), & t\in [\varepsilon, n+\varepsilon],
\end{array}
\right.
\end{equation}
where $u_0(\cdot)\in L^2(0,\varepsilon)$ satisfies the following moment equalities
\begin{equation}\label{eq_admissible_control2_system2}
x_k(k-1 + \varepsilon) = 0, \quad k=\overline{1,n}.
\end{equation}
Direct computations allow to rewrite  (\ref{eq_admissible_control2_system})--(\ref{eq_admissible_control2_system2}) 
in more convenient form.

\begin{prop} 
Every admissible control of the system~(\ref{eq_admissible_changed_system}) corresponding to an initial state (\ref{eq_admissible_initial_state_system}) is of the form 
\begin{equation}\label{eq_admissible_control_system}
u(t)=\left\{
\begin{array}{ll}
u_0(t), & t\in [0,\varepsilon),\\
\varphi_k(t-k) + \frac{g_k}{(k-1)!} \int\limits_0^{t-k} (t-k - \tau)^{k-1} u_0(\tau)\: {\rm d}\tau, 
& t\in [k, k+\varepsilon], k=\overline{1,n},\\
\psi_k(t-k), & t\in [k-1 +\varepsilon, k],
\end{array}
\right.
\end{equation}
here the control generator $u_0(\cdot)\in L^2(0,\varepsilon)$ satisfies the moment relations
\begin{equation}\label{eq_admis_055}
\int_0^\varepsilon (\varepsilon - \tau)^{k-1} u_0(\tau)\: {\rm d}\tau = c_k^\varepsilon, \quad k=\overline{1,n},
\end{equation}
and the constants $c_k^\varepsilon$ and the functions $\varphi_k(\cdot)$, $\psi_k(\cdot)$ are determined by the initial state~(\ref{eq_admissible_initial_state_system}).
\end{prop}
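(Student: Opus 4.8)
The plan is to turn the closed-loop description (\ref{eq_admissible_control2_system})--(\ref{eq_admissible_control2_system2}) into the open-loop formula (\ref{eq_admissible_control_system}) by computing the trajectory explicitly on $[\varepsilon,n+\varepsilon]$ and isolating, component by component, its dependence on the control generator $u_0$. First I would record the structural consequence of the feedback $u(t)=\sum_{k} g_k x_k(t-1)$ chosen on $[\varepsilon,n+\varepsilon]$: it forces $\dot{x}_1\equiv 0$ there, so $x_1$ is constant and equal to $x_1(\varepsilon)$. Using the moment $x_1(\varepsilon)=0$ together with the cascade $\dot{x}_k(t)=x_{k-1}(t-1)$, an induction on $k$ gives $x_k\equiv 0$ on $[k-1+\varepsilon,n+\varepsilon]$: if $x_{k-1}$ vanishes on $[k-2+\varepsilon,n+\varepsilon]$ then $\dot{x}_k$ vanishes on $[k-1+\varepsilon,n+\varepsilon]$, so $x_k$ is constant there and is killed by the moment $x_k(k-1+\varepsilon)=0$. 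This both re-proves null-controllability and tells me exactly on which window each component is still active.

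The heart of the argument is a second induction describing the $u_0$-dependence of each component. I would show that $x_k(s)$ is a function of the initial data (\ref{eq_admissible_initial_state_system}) alone for $s<k-1$, while for $s\in[k-1,k-1+\varepsilon]$
\[
x_k(s)=h_k(s)+\frac{1}{(k-1)!}\int_0^{s-k+1}(s-k+1-\tau)^{k-1}u_0(\tau)\dd\tau,
\]
with $h_k$ data-dependent only. The base case $k=1$ is immediate from $\dot{x}_1=-\sum_j g_j x_j^0(\cdot-1)+u_0$ on $[0,\varepsilon)$, which contributes $\int_0^s u_0$. For the step I integrate $\dot{x}_k=x_{k-1}(\cdot-1)$ from $k-1$ to $s$, insert the inductive formula for $x_{k-1}$ on its window $[k-2,k-2+\varepsilon]$, substitute $\rho=\sigma-(k-1)$ and apply Fubini; the inner integral $\int_\tau^{s-k+1}(\rho-\tau)^{k-2}\dd\rho=(s-k+1-\tau)^{k-1}/(k-1)$ raises the kernel degree by one and yields precisely the stated expression. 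The value $x_k(k-1)$ is data-only since the integrand $x_{k-1}(\cdot-1)$ stays at or below the onset time $k-2$ of the previous component.

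With these two facts the formula (\ref{eq_admissible_control_system}) is read off by substituting into $u(t)=\sum_{j}g_j x_j(t-1)$ on each subinterval. For $t\in[k-1+\varepsilon,k]$ the argument $t-1$ lies in $[k-2+\varepsilon,k-1]$: the terms with $j\le k-1$ vanish by the first induction and those with $j\ge k$ are data-only by the second, so $u(t)$ equals a function $\psi_k(t-k)$ of the initial state. For $t\in[k,k+\varepsilon]$ the argument lies in $[k-1,k-1+\varepsilon]$: again $j\le k-1$ vanish, the components $j\ge k+1$ are data-only, and only $j=k$ carries a $u_0$-term, namely $\frac{g_k}{(k-1)!}\int_0^{t-k}(t-k-\tau)^{k-1}u_0(\tau)\dd\tau$, the data parts summing to $\varphi_k(t-k)$. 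Finally, evaluating the second induction at $s=k-1+\varepsilon$ turns each moment $x_k(k-1+\varepsilon)=0$ into $\int_0^\varepsilon(\varepsilon-\tau)^{k-1}u_0(\tau)\dd\tau=c_k^\varepsilon$ with $c_k^\varepsilon=-(k-1)!\,h_k(k-1+\varepsilon)$, which is exactly (\ref{eq_admis_055}).

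I expect the main obstacle to be the bookkeeping of how $u_0$ propagates through the cascade, in particular making sure the feedback does not reintroduce $u_0$-dependence where the formula claims there is none. This is why each $x_k$ is examined only on its first activation window $[k-1,k-1+\varepsilon]$: there the relevant history $x_{k-1}(\cdot-1)$ sits on the analogous window of the preceding component and has not yet been frozen to a ($u_0$-dependent) constant by the feedback. The condition $\varepsilon<1$ keeps these windows disjoint, separated by the data-only strips $[k-2+\varepsilon,k-1]$, so the interval arithmetic and the Fubini kernel identity, although routine, must be tracked carefully to land precisely on (\ref{eq_admissible_control_system})--(\ref{eq_admis_055}).
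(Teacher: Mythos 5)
Your proposal is correct and takes essentially the same route as the paper: the paper obtains the proposition from the closed-loop description (\ref{eq_admissible_control2_system})--(\ref{eq_admissible_control2_system2}) via unstated ``direct computations,'' and your two inductions --- the forced vanishing $x_k\equiv 0$ on $[k-1+\varepsilon,\,n+\varepsilon]$, and the representation $x_k(s)=h_k(s)+\frac{1}{(k-1)!}\int_0^{s-k+1}(s-k+1-\tau)^{k-1}u_0(\tau)\,{\rm d}\tau$ on $[k-1,\,k-1+\varepsilon]$ with the Fubini identity raising the kernel degree --- are exactly those computations, carried out correctly (including the window bookkeeping that makes $\psi_k$ data-only, isolates the single $j=k$ term in $\varphi_k$, and turns $x_k(k-1+\varepsilon)=0$ into (\ref{eq_admis_055})). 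No gaps to report.
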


\begin{rem}\label{rem_form_control}
We note that for both the equation~(\ref{eq_intro_equation_neutral}) and the systems~(\ref{eq_intro_equation_system}) any admissible control $u\in L^2(0, T)$ is determined by a function $u_0(\cdot)$ from $L^2(0, \varepsilon)$ belonging to some affine hyperplane~$H$: 
$$u(t)=Qu_0(t)+f(t), \quad u_0\in H\subset L^2(0, \varepsilon),$$ 
here $f\in L^2(0,T)$, $Q$ is a bounded operator.
\end{rem}


\section{Minimum energy problem}

In this section we solve the minimum energy problem for the simplest retarded equation~(\ref{eq_intro_equation_simplest}), then for the general equation~(\ref{eq_intro_equation_neutral}),\
and finally for the system~(\ref{eq_intro_equation_system}).

According to Remark~\ref{rem_form_control} the optimal control problems may be rewritten in the form $\Phi(u_0)\rightarrow\min$, $u_0\in H$, where $\Phi(\cdot) = \|u\|_{L^2(0,T)}^2$. 
In order to find the optimal solution we use the ideas similar to those developed in \cite{Krein_Nudelman_1975}.
The functional $\Phi: L^2(0,\varepsilon)\rightarrow \mathbb{R}$ is strictly convex, which is inherited by the strict convexity of  norm. So geometrically our aim is to find $a_0>0$ such that the body $\{u_0: \Phi(u_0)\le a_0\}$ is tangent to the hyperplane $H$, and then to find the point of contact.
This point is the desired optimal control $\widehat{u}_0(t)$ and it is unique due to strict convexity of 
$\Phi$.
We show that the latter problem is equivalent to solution of a Volterra integral equation and we solve this equation explicitly by using the Laplace transform method.
Smoothness of the optimal controls varies depending on class of the corresponding equation~(\ref{eq_intro_equation_neutral}).

\paragraph{One delay term retarded equation.}

Since the admissible controls $u\in\mathcal{U}_T(y, x_0)$ for the equation  (\ref{eq_intro_equation_simplest}) are of the form (\ref{eq_admissible_control_simplest}), then
$$
\begin{array}{rcl}
\|u\|_{L^2(0,T)}^2 & = & \|u_0(t)\|_{L^2(0,\varepsilon)}^2
	+ \left\|a_1 x_0(t-1) \right\|_{L^2(\varepsilon, 1)}^2
 	+ \left\|a_1\widetilde{x}(t-1) +a_1 \int_0^{t-r}  u_0(\tau)\: {\rm d}\tau  \right\|_{L^2(1,1+\varepsilon)}^2\\
 & = &  \|u_0(t)\|_{L^2(0,\varepsilon)}^2
 + \left\|a_1\widetilde{x}(t) +a_1 \int_0^t  u_0(\tau)\: {\rm d}\tau  \right\|_{L^2(0,\varepsilon)}^2
 + C,
 \end{array}
$$
where $C= \left\|a_1 x_0(t-1) \right\|_{L^2(\varepsilon, 1)}^2$ does not depend on $u_0$.

This representation means that the optimal control problem (\ref{eq_intro_min_en_pr}) in $L^2(0, T)$ is equivalent to the optimal problem in $L^2(0,\varepsilon)$:
\begin{equation}\label{eq_min_problem_simplest}
\left\{
\begin{array}{l}
\Phi(u_0)\rightarrow \min,\\
\int\limits_0^\varepsilon u_0(\tau)\: {\rm d}\tau = -\widetilde{x}(\varepsilon),
\end{array}
\right.
\end{equation}
where $\Phi(\cdot)\equiv \|u\|_{L^2(0,T)}^2$, $\Phi: L^2(0,\varepsilon)\rightarrow \mathbb{R}$.

\begin{thm}\label{thm_min_simplest}
The problem (\ref{eq_min_problem_simplest}) possesses the unique solution
\begin{equation}\label{eq_min_optimalu_simplest}
\widehat{u}_0(t) = c \ch (a_1t) 
	+ a_1^2 \int_0^t \ch (a_1[t-\tau]) \widetilde{x}(\tau) \: {\rm d}\tau,
\end{equation}
where 
\begin{equation}\label{eq_min_optimalu_simplest_coeff}
c=-a_1(\sh (a_1\varepsilon))^{-1}\left(\widetilde{x}(\varepsilon)  + a_1 \int_0^\varepsilon \sh (a_1[\varepsilon-\tau]) \widetilde{x}(\tau) \: {\rm d}\tau\right).
\end{equation}
\end{thm}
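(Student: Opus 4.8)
The plan is to regard (\ref{eq_min_problem_simplest}) as a constrained quadratic minimization in the Hilbert space $L^2(0,\varepsilon)$ and to pin down its unique minimizer through the associated stationarity condition. Existence and uniqueness come for free: in the form $\Phi(u_0)=\|u_0\|_{L^2(0,\varepsilon)}^2+\bigl\|a_1\widetilde{x}+a_1\int_0^{\cdot}u_0\bigr\|_{L^2(0,\varepsilon)}^2+C$ the first term is strictly convex and coercive while the second is convex (a squared norm composed with an affine map), so $\Phi$ is strictly convex and coercive, and the constraint $\int_0^\varepsilon u_0=-\widetilde{x}(\varepsilon)$ cuts out a nonempty closed affine hyperplane; the minimizer $\widehat{u}_0$ therefore exists and is unique, which is the analytic counterpart of the tangency picture described above.

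First I would derive the stationarity condition. Perturbing $\widehat{u}_0$ along an admissible direction $\eta$, i.e. one with $\int_0^\varepsilon\eta=0$ so that the constraint is preserved, and equating the directional derivative of $\Phi$ to zero, I get after an interchange of the order of integration in the cross term that $\int_0^\varepsilon\bigl[\widehat{u}_0(t)+a_1\int_t^\varepsilon g(s)\dd s\bigr]\eta(t)\dd t=0$ for every such $\eta$, where $g(s)=a_1\widetilde{x}(s)+a_1\int_0^s\widehat{u}_0(\tau)\dd\tau$. Since the orthogonal complement of the mean-zero functions consists of the constants, this is equivalent to the Volterra-type equation $\widehat{u}_0(t)+a_1\int_t^\varepsilon g(s)\dd s=\mu$ with an unknown constant $\mu$ playing the role of the Lagrange multiplier.

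Next I would solve this equation explicitly. Differentiating once in $t$ gives $\widehat{u}_0'(t)=a_1 g(t)=a_1^2\bigl(\widetilde{x}(t)+\int_0^t\widehat{u}_0(\tau)\dd\tau\bigr)$, and evaluating at $t=0$ (where $\widetilde{x}(0)=y$) furnishes the boundary datum $\widehat{u}_0'(0)=a_1^2 y$; differentiating once more yields the constant-coefficient equation $\widehat{u}_0''-a_1^2\widehat{u}_0=a_1^2\widetilde{x}'$. A direct check — or, as in the paper, the Laplace transform, which renders the forcing as a convolution — shows that $a_1^2\int_0^t\ch(a_1[t-\tau])\widetilde{x}(\tau)\dd\tau$ is a particular solution, so the general solution is $A\ch(a_1t)+B\sh(a_1t)+a_1^2\int_0^t\ch(a_1[t-\tau])\widetilde{x}(\tau)\dd\tau$. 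Imposing $\widehat{u}_0'(0)=a_1^2 y$ forces $B=0$, which is exactly what collapses the answer to the form (\ref{eq_min_optimalu_simplest}); substituting this into the constraint and computing $\int_0^\varepsilon\ch(a_1t)\dd t$ together with the ensuing double integral (again by interchanging the order of integration) then determines $A=c$ and reproduces (\ref{eq_min_optimalu_simplest_coeff}).

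I expect the main obstacle to be careful bookkeeping rather than any genuine difficulty. The two interchanges of integration order must be done correctly, and — most importantly — the boundary datum $\widehat{u}_0'(0)=a_1^2 y$ has to be read off from the integral equation, since it is precisely this condition that annihilates the $\sh(a_1t)$ component and makes the clean form (\ref{eq_min_optimalu_simplest}) appear. I would also check that the two differentiations are legitimate, i.e. that $\widetilde{x}\in W^{1,2}(0,\varepsilon)$; this holds because $\widetilde{x}'=a_1 x_0(\cdot-1)\in L^2$, so $\widetilde{x}$ is absolutely continuous even for $(y,x_0)\in M^2$ in the retarded setting of (\ref{eq_intro_equation_simplest}).
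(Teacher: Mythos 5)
Your proof is correct, and its variational half coincides with the paper's argument: strict convexity of $\Phi$ plus the tangency (Lagrange) condition along mean-zero perturbations yields exactly the paper's integral equation (\ref{eq_min_thmeq_simplest}) --- your $\mu$ is the paper's $\alpha$ and your $g$ is the inner bracket there. Where you genuinely diverge is in solving that equation: the paper rewrites (\ref{eq_min_thmeq_simplest}) as the convolution equation (\ref{eq_min_thmeq2_simplest}) and inverts it by the Laplace transform, absorbing the unknown multiplier into the constant $c$, whereas you differentiate twice to obtain $\widehat{u}_0''-a_1^2\widehat{u}_0=a_1^2\widetilde{x}'$ together with the boundary datum $\widehat{u}_0'(0)=a_1^2 y$ read off from the equation at $t=0$, and then solve by variation of parameters. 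Both are sound; your check that $\widetilde{x}'=a_1x_0(\cdot-1)\in L^2$ legitimizes the differentiations, and bootstrapping regularity from the integral equation in this way also makes transparent the membership $\widehat{u}_0\in W^{2,2}(0,\varepsilon)$, which the paper only records in a remark after the theorem. Your route is more elementary (no transform) and explains structurally why no ${\rm sh}(a_1t)$ component appears --- it is annihilated by the natural condition at $t=0$ rather than simply being absent from the inverse transform --- though note that this step, like the nonvanishing of $\sh(a_1\varepsilon)$ in (\ref{eq_min_optimalu_simplest_coeff}), uses $a_1\neq 0$, which is implicit in the standing assumption $d_N^2+a_N^2\neq 0$ specialized to $N=1$, $d_1=0$. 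The paper's Laplace-transform computation, by contrast, is the template that carries over essentially verbatim to the neutral equation (Theorem~\ref{thm_min_neutral}) and the vector system (Theorem~\ref{thm_min_system}), where the kernels are higher-order convolutions and an ODE reduction would require more bookkeeping.
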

\begin{proof}
The functional $\Phi: L^2(0,\varepsilon)\rightarrow \mathbb{R}$ is strictly convex,
thus, for any $a>0$ the set $B_a=\{u_0: \Phi(u_0)\le a\}$ is convex (if non-empty) and $B_{a_1}\subset B_{a_2}$ as $a_1<a_2$. 
So our aim is to find $a_0$ such that the hyperplane $\{u_0: \int_0^\varepsilon u_0(\tau)\: {\rm d}\tau = c_\varepsilon\}$ is tangent to the body $\{u_0: \Phi(u_0)\le a_0\}$.
The point of contact is the desired optimal control $\widehat{u}_0(t)$ and it is unique due to strict convexity of $\Phi$.
Let $L_{u_0}$ denote the Frech\'et derivative of $\Phi$ at a point $u_0\in L^2(0, \varepsilon)$. 
At the point of contact $\widehat{u}_0$ the equation 
\begin{equation}\label{eq_min_frechet_simplest}
L_{\widehat{u}_0} h =\alpha \langle 1, h(t) \rangle_{L^2(0,\varepsilon)}
\end{equation}
holds for some $\alpha\in\mathbb{R}$ and any $h \in L^2(0,\varepsilon)$.
 
Let us find the Frech\'et derivative from the relation 
$\Phi(u_0 + h) - \Phi(u_0) = L_{{u}_0} h + \overline{o}(\|h\|)$:
$$
\begin{array}{rcl}
\Phi(u_0 + h) - \Phi(u_0) & = & 2\langle u_0(t), h(t) \rangle_{L^2(0,\varepsilon)}  + \|h\|^2 +\\
& & 2 a_1^2 \left\langle \widetilde{x}(t) + \int\limits_0^t u_0(\tau)\: {\rm d}\tau, \int\limits_0^t h(\tau)\: {\rm d}\tau  \right\rangle +   a_1^2 \|\int\limits_0^t h(\tau)\: {\rm d}\tau \|^2.
 \end{array}
 $$
 Since 
 $\left\langle \widetilde{x}(t) + \int_0^t u_0(\tau)\: {\rm d}\tau, \int_0^t h(\tau)\: {\rm d}\tau  \right\rangle_{L^2(0,\varepsilon)} 
 = \int\limits_0^\varepsilon h(t) \int\limits_t^\varepsilon \left(\widetilde{x}(\tau) +  \int\limits_0^\tau u_0(s)\: {\rm d}s \right)\: {\rm d}\tau
 \: {\rm d}t$
and taking into account the inequality $\left\|\int_0^t h(\tau)\: {\rm d}\tau \right\|\le C \|h\|$ which holds for some $C>0$ and any $h \in L^2(0,\varepsilon)$ we obtain
 $$
 L_{\widehat{u}_0} h = 2  \left\langle u_0(t) + a_1^2 \int_t^\varepsilon \int_0^\tau u_0(s)\: {\rm d}s \: {\rm d}\tau +  a_1^2 \int_t^\varepsilon \widetilde{x}(\tau) \: {\rm d}\tau , h(t) \right\rangle 
 $$
 and thus we get that $\widehat{u}_0(t)$ satisfies the integral equation:
\begin{equation}\label{eq_min_thmeq_simplest}
\widehat{u}_0(t) + a_1^2 \int_t^\varepsilon \int_0^\tau \widehat{u}_0(s)\: {\rm d}s \: {\rm d}\tau +  a_1^2 \int_t^\varepsilon \widetilde{x}(\tau) \: {\rm d}\tau = \alpha, \quad t\in [0,\varepsilon],
\end{equation}
where $\alpha$ is an unknown parameter. We notice that
$$
\begin{array}{rcl}
\int\limits_t^\varepsilon \int\limits_0^\tau \widehat{u}_0(s)\: {\rm d}s \: {\rm d}\tau
& = &  \int\limits_0^t \widehat{u}_0(s)\: {\rm d}s  \int\limits_t^\varepsilon \: {\rm d}\tau 
+  \int\limits_t^\varepsilon \widehat{u}_0(s)\: {\rm d}s  \int\limits_s^\varepsilon \: {\rm d}\tau\\
& = & -\int\limits_0^t (t-s) \widehat{u}_0(s)\: {\rm d}s + \int\limits_0^\varepsilon (\varepsilon-s) \widehat{u}_0(s)\: {\rm d}s
 \end{array}
$$
and thus (\ref{eq_min_thmeq_simplest}) may be rewritten as 
\begin{equation}\label{eq_min_thmeq2_simplest}
\widehat{u}_0(t) - a_1^2(t * \widehat{u}_0(t)) =  a_1^2 \int_0^t \widetilde{x}(\tau) \: {\rm d}\tau +c, 
\end{equation}
here 
$c=\alpha-a_1^2 \int\limits_0^\varepsilon (\varepsilon-\tau) \widehat{u}_0(\tau)\: {\rm d}\tau - a_1^2 \int\limits_0^\varepsilon \widetilde{x}(\tau) \: {\rm d}\tau$.
Let us apply the Laplace transform to (\ref{eq_min_thmeq2_simplest}):
$$
\mathcal{L}\{\widehat{u}_0(t) \} = c \frac{s}{s^2 - a_1^2} +  a_1^2 \frac{s}{s^2 - a_1^2} \mathcal{L}\{\widetilde{x}(t)\}.
$$
Applying the inverse Laplace transform we get~(\ref{eq_min_optimalu_simplest}).
Finally, we substitute $\widehat{u}_0(t)$ to the equation of the tangent plane from (\ref{eq_min_problem_simplest}):
$$
c\frac{1}{a_1} \sh(a_1\varepsilon) + a_1\int_0^\varepsilon \sh (a_1[\varepsilon-\tau]) \widetilde{x}(\tau) \: {\rm d}\tau
= -\widetilde{x}(\varepsilon)
$$
and obtain the representation~(\ref{eq_min_optimalu_simplest_coeff}) for the constant $c$.
\end{proof}

\begin{rem}
We note that smoothness of the optimal control varies: 
\begin{itemize}
	\item $\widehat{u}(t)=\widehat{u}_0(t)\in W^{2,2}(0,\varepsilon)$;
	\item $\widehat{u}(t)\in L^{2}(\varepsilon,1)$;
	\item $\widehat{u}(t)\in W^{1,2}(1,1+\varepsilon)$.
\end{itemize}
\end{rem}

From the explicit form of controls~(\ref{eq_admissible_control_simplest}) and  generators of optimal controls (\ref{eq_min_optimalu_simplest}) it follows that for a fixed time $T$ the minimum energy depends linearly on initial states.
However, the dependence of the energy on time is nonlinear.

\begin{rem}
For a fixed initial state $(y,x_0)\in M^2$	the minimum energy increases strictly monotonically as $\varepsilon\rightarrow 0$:
\begin{equation}\label{eq_min_optimalu_simplest_monoton_varepsilon} 
\|\widehat{u}_{1+\varepsilon_1}(t; y, x_0)\|_{L^2(0,1+\varepsilon_1)} >  \|\widehat{u}_{1+\varepsilon_2}(t; y, x_0)\|_{L^2(0,1+\varepsilon_2)}, \qquad \mbox{ as } \varepsilon_1 < \varepsilon_2.
\end{equation}
\end{rem}	
Indeed, the control
$$
\widetilde{u}(t) = \left\{
\begin{array}{ll}
\widehat{u}_{1+\varepsilon_1}(t; y, x_0), & t\in [0,1+\varepsilon_1)\\
0, & t\in [1+\varepsilon_1, 1+\varepsilon_2]
\end{array}
\right.
$$
belongs to $\mathcal{U}_{1+\varepsilon_2}(y, x_0)$ and is not the optimal control in this class due to~(\ref{eq_min_optimalu_simplest}). On the other hand, $\|\widehat{u}_{1+\varepsilon_1}(t; y, x_0)\|_{L^2(0,1+\varepsilon_1)} = \|\widetilde{u}(t)\|_{L^2(0,1+\varepsilon_2)}$.

\begin{rem}
It follows from (\ref{eq_min_optimalu_simplest}) and (\ref{eq_admissible_control_simplest})
that for a fixed initial state $(y,x_0)\not=(0,0)$ the dependence of the minimum energy's growth on $\varepsilon>0$ may be estimated as follows:
\begin{equation}\label{eq_min_optimalu_simplest_estimate_varepsilon}
0<C_0 \le  \|\widehat{u}_{1+\varepsilon}(t; y, x_0)\|_{L^2(0,1+\varepsilon)} \le  \frac{C_1}{\sqrt{\varepsilon}},
\end{equation}
where the constants $C_i$ depends on the initial state.
\end{rem}

\paragraph{Neutral equations of the general form.}

Taking into account the explicit form~(\ref{eq_admissible_control2_neutral}) 
of the admissible controls $u\in\mathcal{U}_T(y, x_0)$ for the general equation (\ref{eq_intro_equation_neutral}) we obtain
$$
\begin{array}{rcl}
\|u\|_{L^2(0,T)}^2 & = & \|u_0(t)\|_{L^2(0,\varepsilon)}^2 + 
\sum\limits_{s=1}^N  \left\|\varphi_s(t) +d_s u_0(t) + (d_s a_0 - a_s) \int\limits_0^t {\rm e}^{a_0(t-\tau)} u_0(\tau)\: {\rm d}\tau  \right\|_{L^2(0,\varepsilon)}^2 \\
& &  + \sum\limits_{s=1}^N  \left\|\psi_s(t) \right\|_{L^2(r_{s-1}+\varepsilon, r_s)}^2,
\end{array}
$$
where the functions $\psi_s(\cdot)$, $\varphi_s(\cdot)$, $s=\overline{1,N}$ determined by the initial state are given by~(\ref{eq_admissible_control_neutral_psi}) and (\ref{eq_admissible_control_neutral_varphi}).

Thus, the optimal control problem (\ref{eq_intro_min_en_pr}) in $L^2(0, T)$ may be equivalently rewritten as the problem in $L^2(0,\varepsilon)$:
\begin{equation}\label{eq_min_problem_neutral}
\left\{
\begin{array}{l}
\Phi(u_0)\rightarrow \min,\\
\int\limits_0^\varepsilon {\rm e}^{a_0 (\varepsilon-\tau)} u_0(\tau)\: {\rm d}\tau = -\widetilde{x}(\varepsilon),
\end{array}
\right.
\end{equation}
where $\Phi(\cdot)\equiv \|u\|_{L^2(0,T)}^2$, $\Phi: L^2(0,\varepsilon)\rightarrow \mathbb{R}$ and $\widetilde{x}(\cdot)$
is defined by (\ref{eq_admissible_trajectory_nocontrol}).

\begin{thm}\label{thm_min_neutral}
The problem (\ref{eq_min_problem_neutral}) possesses the unique solution
	\begin{equation}\label{eq_min_controlu_neutral}
	\begin{array}{rcl}
	\widehat{u}_0(t) & = & d^{-2}\left[
	\sum\limits_{s=1}^N \int\limits_0^t \left((d_s a_0 - a_s)\ch (\hat{a}[t-\tau])+ 
	\left(\frac{a_sa_0}{\hat{a}}  - d_s\hat{a}\right) \sh (\hat{a}[t-\tau])\right) \varphi_s(\tau) \: {\rm d}\tau
	\right.\\
	& & \left. - \sum\limits_{s=1}^N d_s \varphi_s(t) + c \left(\ch (\hat{a}t) - \frac{a_0}{\hat{a}} \sh (\hat{a}t) \right)\right],
	\end{array}
	\end{equation}
	where $d^2 = 1+ \sum_{k=1}^N d_k^2$, $\hat{a}^2=d^{-2}\sum_{k=0}^N a_k^2$, and the constant $c$ determined from the relation~(\ref{eq_admissible_moments_neutral}) is of the form
	\begin{equation}\label{eq_min_controlu_neutral_coeff}
	c=\frac{1}{\sh (\hat{a}\varepsilon)}\left(-d^2 \hat{a} \widetilde{x}(\varepsilon)  + \int_0^\varepsilon \sum_{s=1}^N \left(\hat{a} d_s \ch (\hat{a} [\varepsilon-\tau]) + a_s \sh (\hat{a} [\varepsilon-\tau])\right) \varphi_s(\tau) \: {\rm d}\tau\right).
	\end{equation}
\end{thm}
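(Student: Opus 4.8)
The plan is to follow the variational scheme of the proof of Theorem~\ref{thm_min_simplest}, now carrying the multiplication-by-$d_s$ terms and the exponential kernels ${\rm e}^{a_0(t-\tau)}$ through the computation. Since $\Phi$ is strictly convex (inherited from the norm), the constrained problem (\ref{eq_min_problem_neutral}) has a unique minimizer, characterized as the unique point where a sublevel body $\{u_0:\Phi(u_0)\le a_0\}$ touches the constraint hyperplane. At that point the Fr\'echet derivative $L_{\widehat u_0}$ must be proportional to the functional defining the constraint, so that
\[
L_{\widehat u_0}h=\alpha\,\langle {\rm e}^{a_0(\varepsilon-t)},\,h\rangle_{L^2(0,\varepsilon)}
\]
for some $\alpha\in\mathbb{R}$ and all $h\in L^2(0,\varepsilon)$.

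Next I would compute this derivative. Writing each summand of $\Phi$ as $\|\varphi_s+A_s u_0\|^2$ with $A_s=d_s I+(d_sa_0-a_s)V$ and $(Vf)(t)=\int_0^t{\rm e}^{a_0(t-\tau)}f(\tau)\dd\tau$, one gets
\[
\tfrac12 L_{\widehat u_0}h=\Big\langle\,\widehat u_0+\sum_{s=1}^N A_s^{*}\big(\varphi_s+A_s\widehat u_0\big),\,h\,\Big\rangle,\qquad A_s^{*}=d_s I+(d_sa_0-a_s)V^{*},
\]
where $(V^{*}g)(t)=\int_t^\varepsilon{\rm e}^{a_0(\tau-t)}g(\tau)\dd\tau$. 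Collecting the identity parts yields the coefficient $1+\sum_s d_s^2=d^2$, which is precisely where the constant $d^2$ enters, and the tangency condition becomes the integral equation
\[
d^2\widehat u_0+p\,(V+V^{*})\widehat u_0+q\,V^{*}V\widehat u_0+\sum_{s=1}^N A_s^{*}\varphi_s=\tfrac{\alpha}{2}{\rm e}^{a_0(\varepsilon-t)},
\]
with $p=\sum_s d_s(d_sa_0-a_s)$ and $q=\sum_s(d_sa_0-a_s)^2$.

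The crucial observation, generalizing the passage from (\ref{eq_min_thmeq_simplest}) to (\ref{eq_min_thmeq2_simplest}), is that the self-adjoint operator $V+V^{*}$ has kernel ${\rm e}^{a_0|t-\tau|}$, whose second $t$-derivative reproduces $a_0^2{\rm e}^{a_0|t-\tau|}$ up to a Dirac term; this is what forces a second-order structure and, after the algebra, the denominator $s^2-\hat a^2$. Concretely I would split each $V^{*}$ via ${\rm e}^{-a_0t}\int_0^\varepsilon{\rm e}^{a_0\tau}(\cdots)\dd\tau-{\rm e}^{-a_0t}\int_0^t{\rm e}^{a_0\tau}(\cdots)\dd\tau$, so that every non-convolution contribution is spanned by ${\rm e}^{\pm a_0 t}$ with coefficients that are either the free scalar $\alpha$ or fixed linear functionals of $\widehat u_0$; the equation then reduces to a genuine convolution (Volterra) equation. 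Applying the Laplace transform, solving the resulting algebraic relation for $\mathcal{L}\{\widehat u_0\}$ with denominator $d^2(s^2-\hat a^2)$, $\hat a^2=d^{-2}\sum_{k=0}^N a_k^2$, and inverting term by term produces the convolutions against $\varphi_s$ together with the homogeneous term $c(\ch(\hat a t)-\tfrac{a_0}{\hat a}\sh(\hat a t))$ in (\ref{eq_min_controlu_neutral}) (this last combination is exactly $\mathcal{L}^{-1}\{(s-a_0)/(s^2-\hat a^2)\}$). Finally, substituting $\widehat u_0$ into the moment constraint $\int_0^\varepsilon{\rm e}^{a_0(\varepsilon-\tau)}\widehat u_0(\tau)\dd\tau=-\widetilde x(\varepsilon)$ and solving the resulting scalar equation for $c$ gives (\ref{eq_min_controlu_neutral_coeff}).

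I expect the main obstacle to be the algebraic bookkeeping in this last reduction: verifying that the three families of terms coming from $d^2$, $p$ and $q$, after the backward-to-forward rewriting of $V^{*}$ and the Laplace transform, collapse exactly to the clean denominator $d^2(s^2-\hat a^2)$ and to the stated numerators, rather than to a messier rational function. In particular the emergence of $\hat a^2=d^{-2}\sum_{k=0}^N a_k^2$ (note the inclusion of $a_0^2$, which comes from the exponential kernels rather than from $p,q$ alone) is the delicate point to check. Everything downstream — the inverse Laplace transform and the determination of the single constant $c$ from the one scalar constraint — is then routine and parallels the scalar computation of Theorem~\ref{thm_min_simplest}.
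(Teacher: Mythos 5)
Your proposal is correct and takes essentially the same route as the paper's proof: the strict-convexity/tangency characterization with $L_{\widehat u_0}h=\alpha\langle {\rm e}^{a_0(\varepsilon-t)},h\rangle$, the rewriting of the backward (adjoint) integrals as forward convolutions plus multiples of ${\rm e}^{-a_0 t}$ absorbed into one unknown constant, the Laplace transform collapsing to the denominator $d^2(s^2-\hat a^2)$ (with $a_0^2$ entering exactly as you predict, from the poles $s^2-a_0^2$ of the exponential kernels), and the determination of $c$ from the single moment constraint; your operator notation $A_s=d_sI+(d_sa_0-a_s)V$ is just a compact repackaging of the paper's explicitly written Fr\'echet derivative, with your $p$, $q$ recombining to the paper's coefficient $\sum_k(a_k^2-d_k^2a_0^2)/(2a_0)$. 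The only micro-correction: the non-convolution remainders are all multiples of ${\rm e}^{-a_0 t}$ alone, not of ${\rm e}^{\pm a_0 t}$, which is precisely why a single constant $c$, fixed by the one scalar constraint, suffices.
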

\begin{proof}	
	The idea of the proof is similar to Theorem~\ref{thm_min_simplest} and here we single out the differences.
	The functional $\Phi: L^2(0,\varepsilon)\rightarrow \mathbb{R}$ is strictly convex and direct computations gives us its Frech\'et derivative form the relation 
	$\Phi(u_0 + h) - \Phi(u_0) = L_{{u}_0} h + \overline{o}(\|h\|)$:
	$$
	\begin{array}{rcl}
	L_{{u}_0} & = & \left(1+\sum\limits_{k=1}^N d_k^2 \right) u_0(t) 
	+ \sum\limits_{k=1}^N d_k(d_k a_0 - a_k)\int\limits_0^t {\rm e}^{a_0 (t-\tau)}u_0(\tau)\: {\rm d}\tau
	+ \sum\limits_{k=1}^N d_k \varphi_k(t)\\
	&  & + \sum\limits_{k=1}^N d_k(d_k a_0 - a_k) \int\limits_t^\varepsilon {\rm e}^{-a_0 (t-\tau)}u_0(\tau)\: {\rm d}\tau
	+ \sum\limits_{k=1}^N (d_k a_0 - a_k) \int\limits_t^\varepsilon {\rm e}^{-a_0 (t-\tau)}\varphi_k(\tau)\: {\rm d}\tau \\
	&  & + \sum\limits_{k=1}^N   (d_k a_0 - a_k)^2 \int\limits_t^\varepsilon {\rm e}^{-a_0 (t-\tau)}  \int\limits_0^\tau {\rm e}^{a_0(\tau-s)}  u_0(s)\: {\rm d}s \: {\rm d}\tau.
	\end{array}
	$$
	
	 Since the tangent hyperplane is given by $\{u_0: \int_0^\varepsilon {\rm e}^{a_0 (t-\tau)} u_0(\tau)\: {\rm d}\tau = -\widetilde{x}(\varepsilon)\}$ we obtain the integral equation for the optimal control $\widehat{u}_0(t)$:
	\begin{equation}\label{eq_minl_thmeq_neutral0}
	 L_{\widehat{u}_0} = \alpha {\rm e}^{-a_0 t},
	\end{equation}
	where $\alpha$ is an unknown constant. By using the relation
	$$
	{\rm e}^{-a_0 t} \int\limits_t^\varepsilon {\rm e}^{2a_0 \tau} \int\limits_0^\tau {\rm e}^{-s a_0}  \widehat{u}_0(s)\: {\rm d}s \: {\rm d}\tau
	$$
	$$
	= \frac{1}{2a_0} \int\limits_0^t ({\rm e}^{-a_0 (t-s)} - {\rm e}^{a_0 (t-s)}) \widehat{u}_0(s)\: {\rm d}s 
	+ \frac{{\rm e}^{-a_0 t} }{2a_0}  \int\limits_0^\varepsilon ({\rm e}^{2a_0 \varepsilon} - {\rm e}^{2 a_0 s}) {\rm e}^{- a_0 s}  \widehat{u}_0(s)\: {\rm d}s,
	$$
	we rewrite~(\ref{eq_minl_thmeq_neutral0}) as 
	$$
	\left(1+\sum_{k=1}^N d_k^2 \right) \widehat{u}_0(t) + 
	\frac{\sum_{k=1}^N (a_k^2 - d_k^2 a_0^2)}{2a_0} ({\rm e}^{-a_0 t} - {\rm e}^{a_0 t}) * \widehat{u}_0(t)
	$$ 
	\begin{equation}\label{eq_minl_thmeq_neutral}
	=  
	\sum_{k=1}^N (d_k a_0 - a_k)({\rm e}^{-a_0 t} * \varphi_k(t)) - \sum_{k=1}^N d_k \varphi_k(t) + c {\rm e}^{-a_0 t}, 
	\end{equation}
	where $c$ is an unknown constant.
	Further, we apply the Laplace transform to (\ref{eq_minl_thmeq_neutral}):
	$$
	d^2 \mathcal{L}\{\widehat{u}_0(t) \} \left(1 +\frac{\sum_{k=1}^N (a_k^2 - d_k^2 a_0^2)}{2a_0d^2} \left(\frac{1}{s+a_0} -\frac{1}{s-a_0} \right) \right)  
	$$
	$$
	= \frac{1}{s+a_0} \sum_{k=1}^N (d_k a_0 - a_k) \mathcal{L}\{\varphi_k(t)\} - 
	\sum_{k=1}^N d_k \mathcal{L}\{\varphi_k(t)\} +c\frac{1}{s+a_0} ,
	$$
	this gives us
	$$
	d^2 \mathcal{L}\{\widehat{u}_0(t) \} = 
	\frac{s - a_0}{s^2 - \hat{a}^2} \sum_{k=1}^N (d_k a_0 - a_k) \mathcal{L}\{\varphi_k(t)\} 
	+  \sum_{k=1}^N d_k \left(1+  \frac{\hat{a}^2 - a_0^2}{s^2 - \hat{a}^2}  \right) \mathcal{L}\{\varphi_k(t)\}
	+ c \frac{s - a_0}{s^2 - \hat{a}^2}.
	$$
	Applying the inverse Laplace transform we get (\ref{eq_min_controlu_neutral})
	and substituting $\widehat{u}_0(t)$ to the equation of the tangent plane~(\ref{eq_admissible_moments_neutral})
	and integrating by parts we obtain the representation~(\ref{eq_min_controlu_neutral_coeff}) for the constant $c$.
\end{proof}

\begin{rem}
	We note that, in general, $\widehat{u}_0(t)\in L^2(0,\varepsilon)$ since its representation~(\ref{eq_min_controlu_neutral}) includes $\varphi_s(\cdot)\in L^2(0,\varepsilon)$. Moreover, from~(\ref{eq_admissible_control2_neutral}) we conclude that the corresponding minimum energy control $\widehat{u}_T(t; y, x_0)\in L^2(0,T)$.
\end{rem}

\begin{cor}\label{thm_min_retarded}
	For the general retarded equations ($d_k=0$, $k=\overline{1,N}$) the problem (\ref{eq_min_problem_neutral}) possesses the unique solution
	\begin{equation}\label{eq_min_controlu_retarded}
	\widehat{u}_0(t) = c \left(\ch (\hat{a}t) - \frac{a_0}{\hat{a}} \sh (\hat{a}t) \right) 
	+ \sum_{s=1}^N a_s \int_0^t \left(\frac{a_0}{\hat{a}} \sh (\hat{a}[t-\tau]) - \ch (\hat{a}[t-\tau])\right) \varphi_s(\tau) \: {\rm d}\tau,
	\end{equation}
	where $\hat{a}^2=\sum_{k=0}^N a_k^2$, and 
	\begin{equation}\label{eq_min_constantc_retarded}
	c=(\sh (\hat{a}\varepsilon))^{-1} \left(-\widetilde{x}(\varepsilon) \hat{a}  
	+ \sum\limits_{s=1}^N a_s \int_0^\varepsilon \sh (\hat{a}[\varepsilon-\tau]) \varphi_s(\tau) \: {\rm d}\tau\right).
	\end{equation}
\end{cor}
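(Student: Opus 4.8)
The plan is to obtain Corollary~\ref{thm_min_retarded} as the specialization of Theorem~\ref{thm_min_neutral} to the case $d_k=0$, $k=\overline{1,N}$, rather than to rerun the whole argument. The first thing I would check is that problem~(\ref{eq_min_problem_neutral}) for a retarded equation really is the $d_k=0$ instance of the neutral problem already solved: the constraint $\int_0^\varepsilon {\rm e}^{a_0(\varepsilon-\tau)}u_0(\tau)\dd\tau=-\widetilde{x}(\varepsilon)$ is formally unchanged, and, by the remark containing~(\ref{eq_admissible_control_retarded}), the admissible control~(\ref{eq_admissible_control2_neutral}) and hence the functional $\Phi(u_0)=\|u\|_{L^2(0,T)}^2$ reduce exactly to their $d_k=0$ forms, with $\varphi_s$ keeping its meaning as $\varphi_s(t-r_s)=-a_s\widetilde{x}(t-r_s)-\sum_{k=s+1}^N a_k x_0(t-r_k)$. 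Correspondingly $d^2=1+\sum_{k=1}^N d_k^2=1$ and $\hat a^2=d^{-2}\sum_{k=0}^N a_k^2=\sum_{k=0}^N a_k^2$; moreover the standing assumption $d_N^2+a_N^2\neq0$ forces $a_N\neq0$ when $d_N=0$, so $\hat a>0$ and $\sh(\hat a\varepsilon)\neq0$, which guarantees that~(\ref{eq_min_constantc_retarded}) is well defined.

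Granting this, I would follow the steps of the proof of Theorem~\ref{thm_min_neutral} with $d_k=0$ throughout. Strict convexity of $\Phi$ and the tangency characterization of the minimizer are unaffected, so a unique optimum exists and satisfies $L_{\widehat u_0}=\alpha\,{\rm e}^{-a_0t}$. In the displayed expression for $L_{u_0}$ every summand carrying a factor $d_k$ disappears, leaving only $u_0(t)$, the term $-\sum_{k=1}^N a_k\int_t^\varepsilon {\rm e}^{-a_0(t-\tau)}\varphi_k(\tau)\dd\tau$, and the double-integral term with coefficient $a_k^2$. Using the same identity for the iterated integral as in Theorem~\ref{thm_min_neutral}, the tangency equation collapses to the $d_k=0$ case of~(\ref{eq_minl_thmeq_neutral}), namely $\widehat u_0(t)+\frac{\sum a_k^2}{2a_0}({\rm e}^{-a_0t}-{\rm e}^{a_0t})*\widehat u_0(t)=-\sum a_k({\rm e}^{-a_0t}*\varphi_k(t))+c\,{\rm e}^{-a_0t}$.

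Applying the Laplace transform, the multiplier acting on $\mathcal{L}\{\widehat u_0\}$ reduces to $1$ and the middle group of terms carrying the factors $d_k$ vanishes, so that $\mathcal{L}\{\widehat u_0\}=\frac{s-a_0}{s^2-\hat a^2}\bigl(c-\sum a_k\mathcal{L}\{\varphi_k\}\bigr)$. Inverting with $\mathcal{L}^{-1}\{(s-a_0)/(s^2-\hat a^2)\}=\ch(\hat a t)-\frac{a_0}{\hat a}\sh(\hat a t)$ yields exactly~(\ref{eq_min_controlu_retarded}), and substituting this $\widehat u_0$ into the constraint and integrating by parts gives~(\ref{eq_min_constantc_retarded}), which is the $d_k=0$, $d^2=1$ instance of~(\ref{eq_min_controlu_neutral_coeff}). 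The only step that needs genuine attention is the first one—confirming that the retarded functional is literally the $d_k=0$ restriction of the neutral functional, so that no unexpected cross-terms survive; everything afterwards is algebraic bookkeeping. As a by-product one sees that, since no derivatives of $x_0$ enter $\widetilde x$ or $\varphi_s$ when $d_k=0$, the kernel representation~(\ref{eq_min_controlu_retarded}) is meaningful for all $(y,x_0)\in M^2$ and produces $\widehat u_0\in W^{1,2}(0,\varepsilon)$, consistent with the smoothness gain announced in the introduction.
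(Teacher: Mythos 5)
Your proposal is correct and matches the paper's intent exactly: the paper states this result as a corollary of Theorem~\ref{thm_min_neutral} with no separate proof, and your specialization $d_k=0$ (giving $d^2=1$, $\hat a^2=\sum_{k=0}^N a_k^2$, the collapsed Volterra equation, and the inverse transform of $\tfrac{s-a_0}{s^2-\hat a^2}$) reproduces~(\ref{eq_min_controlu_retarded})--(\ref{eq_min_constantc_retarded}) faithfully. Your added checks---that $a_N\neq 0$ ensures $\sh(\hat a\varepsilon)\neq 0$, and that the formulas remain meaningful for all $(y,x_0)\in M^2$ since no derivatives of $x_0$ enter $\varphi_s$ or $\widetilde x$ when $d_k=0$---are sound and slightly more careful than the paper itself.
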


\begin{rem}
	For general retarded systems $\widehat{u}_0(t)\in W^{1,2}(0,\varepsilon)$,
	and for $t\in[\varepsilon, 1+\varepsilon)$: $\widehat{u}_T(t; y, x_0)\in L^{2}(\varepsilon,1+\varepsilon)$.
\end{rem}

\paragraph{System of retarded equations.}

From the representation~(\ref{eq_admissible_control_system}) of admissible controls  we get that
$$
\begin{array}{rcl}
\Phi(u_0)  & = & \|u\|_{L^2(0,T)}^2  \\
& = &  \|u_0\|_{L^2(0,\varepsilon)}^2 + 
\sum\limits_{k=1}^N  \left\|\varphi_k(t) +\frac{g_k}{(k-1)!} t^{k-1}*u_0(t) \right\|_{L^2(0,\varepsilon)}^2
+ \sum\limits_{s=1}^N  \left\|\psi_s(t) \right\|^2,
\end{array}
$$
where the functions $\psi_s(\cdot)$, $\varphi_s(\cdot)$ are determined by the initial state.
So, taking into account (\ref{eq_admis_055}) we obtain the optimal control problem
\begin{equation}\label{eq_min_problem_system}
\left\{
\begin{array}{l}
\Phi(u_0)\rightarrow \min,\\
\int\limits_0^\varepsilon (\varepsilon - \tau)^{k-1} u_0(\tau)\: {\rm d}\tau = c_k^\varepsilon, \quad k=\overline{1,n}.
\end{array}
\right.
\end{equation}
Geometrically our aim is to find $a_0>0$ such that the body $\{u_0: \Phi(u_0)\le a_0\}\subset L^2(0,\varepsilon)$ is tangent to the intersection of the $n$ hyperplanes defined by (\ref{eq_admis_055}).
This means that at the point of contact $\widehat{u}_0(t)$ we will have
\begin{equation}\label{eq_min_frechet_system}
\ell_{\widehat{u}_0} = \sum_{k=1}^n q_k (\varepsilon - t)^{k-1},
\end{equation}
for some constants $\{q_k\}$, where  $\ell_{\widehat{u}_0}$ is the Frech\'et derivative of $\Phi$.

\begin{thm}\label{thm_min_system}
	The problem (\ref{eq_min_problem_system}) possesses the unique solution
	\begin{equation}\label{eq_min_controlu_system}
	\widehat{u}_0(t) = \mathcal{L}^{-1}\left( \frac{s^{2n}}{s^{2n} +\sum_{k=1}^n (-1)^k g_k^2 s^{2(n-k)}} \right)
	* \left(\sum_{k=1}^n q_k (\varepsilon - t)^{k-1}
	+ \sum\limits_{k=1}^N \frac{(-1)^k  g_k}{(k-1)!} t^{k-1}*\varphi_k(t) \right),
	\end{equation}
	where $\mathcal{L}^{-1}$ is the inverse Laplace transform and the constants $q_k$, $k=\overline{1,n}$ can be found from the system ol linear algebraic equations (\ref{eq_admis_055}).
\end{thm}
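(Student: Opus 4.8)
The plan is to reproduce the variational scheme of Theorems~\ref{thm_min_simplest} and~\ref{thm_min_neutral}, the only new feature being that the feasible set of~(\ref{eq_min_problem_system}) is now the intersection of $n$ hyperplanes rather than one. Since $\Phi$ is strictly convex and the feasible set is a nonempty affine subspace of codimension $n$ (the polynomials $(\varepsilon-t)^{k-1}$, $k=\overline{1,n}$, are linearly independent, so the $n$ constraint functionals are independent), the minimizer exists and is unique, and it is characterized by the orthogonality of the gradient to the tangent space, i.e. by~(\ref{eq_min_frechet_system}): the Frech\'et derivative $\ell_{\widehat u_0}$ lies in ${\rm Lin}\{(\varepsilon-t)^{k-1}\}_{k=1}^n$.

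First I would compute $\ell_{u_0}$ from $\Phi(u_0+h)-\Phi(u_0)=\ell_{u_0}h+\overline{o}(\|h\|)$. Abbreviating the Volterra operators in $\Phi$ by $B_k u_0(t)=\frac{g_k}{(k-1)!}\,(t^{k-1}*u_0)(t)$, whose $L^2(0,\varepsilon)$-adjoint is $B_k^* f(t)=\frac{g_k}{(k-1)!}\int_t^\varepsilon (\tau-t)^{k-1} f(\tau)\,{\rm d}\tau$, the linear part of the increment collects (up to the scalar factor $2$, which I absorb into the multipliers) into
\begin{equation*}
\ell_{\widehat u_0}=\widehat u_0+\sum_{k=1}^n B_k^*\big(\varphi_k+B_k\widehat u_0\big).
\end{equation*}
Together with~(\ref{eq_min_frechet_system}) this gives the integral equation
\begin{equation*}
\widehat u_0(t)+\sum_{k=1}^n B_k^* B_k\widehat u_0(t)=\sum_{k=1}^n q_k(\varepsilon-t)^{k-1}-\sum_{k=1}^n B_k^*\varphi_k(t).
\end{equation*}

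The decisive step, and the one I expect to be the main obstacle, is to turn the reversed Volterra operators $B_k^*$ into genuine convolutions, exactly as the iterated integrals were rearranged in Theorems~\ref{thm_min_simplest} and~\ref{thm_min_neutral}. The relevant identity is $\frac{{\rm d}^k}{{\rm d}t^k}\int_t^\varepsilon (\tau-t)^{k-1} f(\tau)\,{\rm d}\tau=(-1)^k(k-1)!\,f(t)$, together with the vanishing at $t=\varepsilon$ of all derivatives of order $<k$. Consequently every $B_k^* f$ decomposes as a causal convolution plus a polynomial in $t$ of degree $\le k-1$ whose coefficients are moments of $f$ on $[0,\varepsilon]$: the convolution part of $B_k^*\varphi_k$ is a constant multiple of $t^{k-1}*\varphi_k$, producing (up to routine sign bookkeeping) the terms $\tfrac{(-1)^k g_k}{(k-1)!}\,t^{k-1}*\varphi_k$ of~(\ref{eq_min_controlu_system}), while the convolution part of $B_k^* B_k\widehat u_0$ has Laplace transform $(-1)^k g_k^2\, s^{-2k}\,\mathcal L\{\widehat u_0\}$. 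All the polynomial remainders have degree $\le n-1$, hence belong to ${\rm Lin}\{(\varepsilon-t)^{k-1}\}_{k=1}^n$; following the device already used for the ${\rm e}^{-a_0t}$-term in Theorem~\ref{thm_min_neutral}, I absorb them, including those that depend linearly on $\widehat u_0$, into the still-unknown coefficients $q_k$. What is left is a pure convolution equation, and applying $\mathcal L$ makes $\mathcal L\{\widehat u_0\}\big(1+\sum_{k=1}^n(-1)^k g_k^2 s^{-2k}\big)$ equal to the transform of the right-hand side. Multiplying through by $s^{2n}$ identifies $\big(1+\sum_k(-1)^k g_k^2 s^{-2k}\big)^{-1}$ with the resolvent $s^{2n}\big/\big(s^{2n}+\sum_{k=1}^n(-1)^k g_k^2 s^{2(n-k)}\big)$ of~(\ref{eq_min_controlu_system}), and inverting the transform yields the stated formula.

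It remains to pin down the multipliers. Substituting $\widehat u_0$, which is affine in $q_1,\dots,q_n$, into the $n$ moment relations~(\ref{eq_admis_055}) produces a linear algebraic system for the $q_k$; its unique solvability is guaranteed a posteriori, since the minimizer being described is already known to exist and to be unique.
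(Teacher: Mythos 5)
Correct, and essentially the paper's own argument: the paper likewise derives the stationarity condition (\ref{eq_min_frechet_system}), converts the adjoint Volterra terms into causal convolutions plus polynomials of degree $\le n-1$ (via $k$-fold integration by parts and the splitting $\int_{\tau_s}^\varepsilon=\int_0^\varepsilon-\int_0^{\tau_s}$, which is exactly your decomposition of $B_k^*$), absorbs the polynomial remainders into new constants, and finishes with the Laplace transform and the moment relations (\ref{eq_admis_055}); your adjoint-operator notation and the explicit existence/uniqueness remarks are only cosmetic improvements. One small caveat: your ``routine sign bookkeeping'' is not quite routine --- moving $-\sum_{k}B_k^*\varphi_k$ to the right-hand side yields the coefficient $(-1)^{k+1}g_k/(k-1)!$ on the $t^{k-1}*\varphi_k$ terms, whereas the paper's displayed equation and (\ref{eq_min_controlu_system}) carry $(-1)^k$, so your computation in fact exposes an apparent sign slip in the paper rather than reproducing its stated sign.
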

\begin{proof}
We denote $\Phi_k(u_0)=\left\|\varphi_k(t) +\frac{g_k}{(k-1)!} t^{k-1}*u_0(t) \right\|_{L^2(0,\varepsilon)}^2$ and compute its Frech\'et derivative $\ell_{\widehat{u}_0}^k$: 
$$
\Phi_k(\widehat{u}_0 + h) - \Phi_k(\widehat{u}_0) 
$$
$$
= 2  \frac{g_k}{(k-1)!} \int_0^\varepsilon \left[\varphi_k(t) + \frac{g_k}{(k-1)!} t^{k-1}*\widehat{u}_0(t)\right]\cdot t^{k-1}*h(t)\: {\rm d}t + \overline{o}(\|h(t)\|).
$$	
We integrate by parts $k$ times and obtain that
$$
\ell_{\widehat{u}_0}^k = 2g_k \int_t^\varepsilon \int_{\tau_1}^\varepsilon \ldots \int_{\tau_{k-1}}^\varepsilon
\left[\varphi_k(\tau_k) + \frac{g_k}{(k-1)!} \tau_k^{k-1}*\widehat{u}_0(t)\right] \: {\rm d}\tau_k \ldots {\rm d}\tau_1.
$$	
Taking into account $\int_{\tau_{s}}^\varepsilon = \int_0^\varepsilon - \int_0^{\tau_{s}}$ and adding powers in convolution product we get:
$$
\ell_{\widehat{u}_0}^k = 2\left[\frac{(-1)^k g_k}{(k-1)!} t^{k-1}*\varphi_k(t) + \frac{(-1)^k g_k^2}{(2k-1)!} t^{2k-1}* \widehat{u}_0(t) + \sum_{s=1}^k \widetilde{q}_{k,s-1}(\varepsilon - t)^{s-1}\right],
$$	
where $\widetilde{q}_{k,s-1}$ are constants. Thus we can rewrite (\ref{eq_min_frechet_system}) as	
$$
\widehat{u}_0(t) + \sum_{k=1}^N \frac{(-1)^k g_k^2}{(2k-1)!}t^{2k-1}* \widehat{u}_0(t)
=
\sum_{k=1}^N \frac{(-1)^k g_k}{(k-1)!} t^{k-1}*\varphi_k(t) + \sum_{k=1}^n \widehat{q}_k (\varepsilon - t)^{k-1}.
$$
We apply the Laplace transform to the latter equation:
$$
\mathcal{L}\{\widehat{u}_0(t)\} \left(1+\sum_{k=1}^N \frac{(-1)^k g_k^2}{s^{2k}}
\right)
= \mathcal{L}\left\{ \sum_{k=1}^N \frac{(-1)^k g_k}{(k-1)!} t^{k-1}*\varphi_k(t) + \sum_{k=1}^n \widehat{q}_k (\varepsilon - t)^{k-1}  \right\}
$$
and after a simple transformation and 
applying the inverse Laplace transform we get (\ref{eq_min_controlu_system}).
\end{proof}


\section{The characteristic space and optimal controls}

In this section we consider the solutions $\widehat{u}_T(t)$ of the  minimum energy problem for the equation (\ref{eq_intro_equation_neutral}) and prove that after the  symmetric change of time they belong to the characteristic subspace: $\widehat{u}_T(T-t;y,x_0)\in E_T$, for any initial state $(y,x_0)$ and any $T>1$. 
For this we describe the orthogonal complement $(E_T)^\perp$ and show that optimal controls are orthogonal to $(E_T)^\perp$.
We note that such property holds for some other dynamical systems, e.g. for non-homogeneous vibrating string  \cite{Sklyar_Szkibiel_20012}, rotating beam \cite{Sklyar_Wozniak_2015}, etc. 
To illustrate the ideas of the proof we first consider the case of the simplest retarded equation~(\ref{eq_intro_equation_simplest}) and then we consider the general equation~(\ref{eq_intro_equation_neutral}).

\paragraph{One delay term retarded equations.}

The characteristic function~(\ref{eq_intro_characteristic}) of the equation (\ref{eq_intro_equation_simplest}) is  $D(z) = a_1 - iz {\rm e}^{i z r}$. We denote the set of its zeros by $\Lambda=\{z_k\}_{k\in\mathbb{Z}}$
and the corresponding family of exponentials by $\mathcal{E}(\Lambda)=\left\{{\rm e}^{i z_k t} \right\}_{k\in\mathbb{Z}}.$
For any $\varepsilon>0$, this system is minimal and of infinite deficiency in the space $L^2(0,T)$, $T=1+\varepsilon$ (see e.g. \cite{Levin_1996}). 
We also note that $x(t; 1, {\rm e}^{i z_k t}, 0) = {\rm e}^{i z_k t}$.
By $E_T$ we denote the closure in $L^2(0, T)$ of its linear span:
$$
E_T = \overline{{\rm Lin}\: \mathcal{E}(\Lambda)} \subset L^2(0, T).
$$
We begin with characterization of the orthogonal complement $(E_T)^\perp$.

\begin{prop}\label{prop_characteristic_simplest}
Function $f(t)\in (E_T)^\perp$ if and only if there exists $q(t)\in {W}^{1,2}(0,\varepsilon)$,  $q(0)=q(\varepsilon)=0$ such that 
\begin{equation}\label{eq_characteristic_eorth_simplest}
f(t) = \left\{
\begin{array}{rl}
a_1 q(t), & t\in [0,\varepsilon],\\
0, & t\in [\varepsilon, 1],\\
q'(t-1), & t\in [1,1+\varepsilon].
\end{array}
\right. 
\end{equation}
\end{prop}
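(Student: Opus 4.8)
The plan is to prove the two inclusions separately, after converting the orthogonality relations $\langle f,{\rm e}^{iz_kt}\rangle_{L^2(0,T)}=0$ into a single statement about an entire function. First I would record a symmetry of the spectrum: writing the characteristic equation as $a_1=iz_k{\rm e}^{iz_k}$ and conjugating (recall $a_1\in\mathbb{R}$) gives $a_1+i\overline{z_k}\,{\rm e}^{-i\overline{z_k}}=0$, i.e. $D(-\overline{z_k})=0$, so that $\Lambda=-\overline{\Lambda}$. Introducing the transform $H(z)=\int_0^T f(t){\rm e}^{izt}\,{\rm d}t$, which is entire of exponential type and square-integrable on $\mathbb{R}$ by the Paley--Wiener theorem, and noting that $\langle f,{\rm e}^{iz_kt}\rangle_{L^2(0,T)}=H(-\overline{z_k})$ while $-\overline{z_k}$ runs through all of $\Lambda$ as $k$ varies, the condition $f\in(E_T)^\perp$ becomes equivalent to $H$ vanishing on the whole zero set $\Lambda$ of $D$.

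For the easy (``if'') inclusion I would simply substitute the proposed form of $f$ into $\langle f,{\rm e}^{iz_kt}\rangle_{L^2(0,T)}$. The middle piece contributes nothing; after the shift $s=t-1$ in the last piece and one integration by parts (the boundary terms dropping out precisely because $q(0)=q(\varepsilon)=0$), both remaining integrals collapse onto $\int_0^\varepsilon q(s){\rm e}^{-i\overline{z_k}s}\,{\rm d}s$ with the common factor $a_1+i\overline{z_k}\,{\rm e}^{-i\overline{z_k}}$. By the conjugated characteristic equation recorded above this factor vanishes, so every $f$ of the stated form is orthogonal to each ${\rm e}^{iz_kt}$, hence to $E_T$.

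For the converse I would exploit that $D(z)=a_1-iz{\rm e}^{iz}$ is the symbol of the delay operator behind the equation: $D$ is the transform of the distribution $a_1\delta_0+\delta_1'$ supported on $[0,1]$. Given $f\in(E_T)^\perp$, the function $H$ vanishes on all (simple) zeros of $D$, so $P:=H/D$ is entire; I would then show that $P(z)=\int_0^\varepsilon q(t){\rm e}^{izt}\,{\rm d}t$ for some $q\in W^{1,2}(0,\varepsilon)$. Reading $H=D\cdot P$ back in the time domain as the convolution $(a_1\delta_0+\delta_1')*q$ yields exactly $f(t)=a_1q(t)$ on $[0,\varepsilon]$, $f\equiv0$ on $[\varepsilon,1]$, and $f(t)=q'(t-1)$ on $[1,1+\varepsilon]$; the endpoint conditions $q(0)=q(\varepsilon)=0$ emerge as the requirement that the distributional derivative in $\delta_1'*q$ produce no Dirac masses at $t=1$ and $t=1+\varepsilon$, which is precisely what is needed for $f$ to be an honest $L^2$ function rather than a distribution.

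The main obstacle is the middle step of the converse: justifying that $P=H/D$ is itself the Paley--Wiener transform of a function on $[0,\varepsilon]$ of the claimed regularity. This requires, via a Phragm\'en--Lindel\"of argument, a lower bound $|D(z)|\gtrsim|z|\,{\rm e}^{|\Im z|}$ outside small discs about $\Lambda$ in the lower half-plane together with the upper bounds $|H(z)|\lesssim{\rm e}^{(1+\varepsilon)|\Im z|}$ there and boundedness of $H$ (and $D\to a_1\neq0$) in the upper half-plane; combining them gives $|P(z)|\lesssim{\rm e}^{\varepsilon|\Im z|}/|z|$ off the real axis and $L^2$ control on it. The extra factor $|z|$ coming from the $iz$ in $D$ is exactly what forces the $1/|z|$ decay of $P$ on $\mathbb{R}$, and hence the regularity $q\in W^{1,2}$ together with the vanishing of $q$ at the endpoints. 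Everything else is a routine matching of transforms once this growth estimate and the division are established.
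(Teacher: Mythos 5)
Your overall architecture coincides with the paper's own proof: pass to the transform $H(z)=\int_0^T f(t){\rm e}^{izt}\,{\rm d}t$, characterize $(E_T)^\perp$ by the factorization $H=DQ$ with $Q$ and $zQ$ in $PW_{(0,\varepsilon)}$, and read the product back in the time domain as convolution with $a_1\delta_0+\delta_1'$, the endpoint conditions $q(0)=q(\varepsilon)=0$ being exactly what removes the Dirac masses at $t=1$ and $t=1+\varepsilon$ (the paper gets the same conditions by writing $zQ(z)=\int_0^\varepsilon {\rm e}^{izt}g(t)\,{\rm d}t$, noting $\int_0^\varepsilon g=0$ at $z=0$, and concluding $q(t)=-i\int_0^t g$). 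Your explicit use of the symmetry $\Lambda=-\overline{\Lambda}$ to identify orthogonality with vanishing of $H$ on $\Lambda$, and the direct integration-by-parts check of the ``if'' direction, are details the paper leaves implicit, and they are correct.

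The flaw sits precisely in the step you flag as the main obstacle, and it is a concrete misstatement, not just a gap: the geometry of the spectrum is inverted. For $D(z)=a_1-iz{\rm e}^{iz}$ the zeros satisfy $|z_k|{\rm e}^{-\Im z_k}=|a_1|$, i.e.\ $\Im z_k=\ln(|z_k|/|a_1|)\to+\infty$, so $\Lambda$ lies in the \emph{upper} half-plane, drifting logarithmically upward (the paper itself records that for retarded equations $\{z_k\}\subset\{\Im z\ge c\}$). Consequently your two estimates are misassigned: in the lower half-plane $|D(z)|\asymp|z|{\rm e}^{|\Im z|}$ holds cleanly with \emph{no} exceptional discs, whereas in the upper half-plane $D$ neither tends to $a_1$ nor is bounded below --- on each horizontal line $\Im z=c$ one has $|D(z)|\sim|z|{\rm e}^{-c}\to\infty$ as $|\MRe z|\to\infty$, and $D$ vanishes at every $z_k$. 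So the inference ``$H$ bounded and $D\to a_1\neq0$ in the upper half-plane, hence $P=H/D$ is bounded there'' fails as stated. The repair is standard but must be made: excise small discs about the separated simple zeros $z_k$ in the upper half-plane, bound $P$ on the boundary circles (using that $H$ vanishes at each $z_k$), and fill in the discs by the maximum principle; alternatively, invoke the division theorem for entire functions of exponential type together with the real-axis bound $|D(x)|\gtrsim 1+|x|$, which is exactly the paper's shortcut (``$x/D(x)$ is bounded along the real axis'', whence $Q$ and $zQ$ lie in $L^2(\mathbb{R})$ and then in $PW_{(0,\varepsilon)}$). With that correction, the rest of your argument --- the $1/|z|$ decay forcing $q\in W^{1,2}(0,\varepsilon)$ with vanishing endpoints, and the time-domain matching --- reproduces the paper's proof.
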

\begin{proof}
Let $PW_{(a,b)}$ denotes the Fourier transform of $L^2(a,b)$ (the Paley-Wiener space of entire functions, see e.g. \cite{Levin_1996}). We denote $F(z)=\int_{0}^T{\rm e}^{i z t}  f(t) \: {\rm d} t \in PW_{(0,T)}$.
First let us show that $f(t)\in (E_T)^\perp$ if and only if there exists $Q(z)\in PW_{(0,\varepsilon)}$ such that $zQ(z)\in PW_{(0,\varepsilon)}$ and 
$$
F(z)=D(z)Q(z).
$$
Indeed, by construction $D(z)Q(z) \in PW_{(0,T)}$, thus by the Paley-Wiener theorem there exists $f(t)\in L^2(0,T)$ such that 
 $D(z)Q(z)=\int_{0}^T  {\rm e}^{i z t} f(t) \: {\rm d} t \in PW_{(0,T)}$. Since $D(z_k)=0$, for any $k\in \mathbb{Z}$ then $f(t)\in (E_T)^\perp$. 
Inversely, if $f(t)\in (E_T)^\perp$ then $F(z_k)=0$ for any $k\in \mathbb{Z}$. Then $Q(z)\equiv\frac{F(z)}{D(z)}$ is entire function.
Due to the form of $D(z)$ one has $\frac{F(x)}{D(x)}\in L^2(\mathbb{R})$ and  $\frac{xF(x)}{D(x)}\in L^2(\mathbb{R})$ since $\frac{x}{D(x)}$ is bounded along the real axis.  Thus, $Q(z)$ and $zQ(z)$ belong to $PW_{(0,\varepsilon)}$.

Further, by the Paley-Wiener theorem there exists $q(t)\in L^2(0, \varepsilon)$ such that $Q(z)=\int_{0}^\varepsilon {\rm e}^{i z t} q(t) \: {\rm d} t$. Function $q(t)$ admits the characterization
$$
q(t)\in {W}^{1,2}(0,\varepsilon), \qquad q(0)=q(\varepsilon)=0.
$$
Indeed, since $zQ(z) \in PW_{0,\varepsilon}$, then $zQ(z)=\int_{0}^\varepsilon {\rm e}^{i z t} g(t) \: {\rm d} t$. When $z=0$ we get 
$\int_{0}^\varepsilon g(t) \: {\rm d} t = 0$ and thus
$$
\int_{0}^\varepsilon {\rm e}^{i z t} g(t) \: {\rm d} t = \int_{0}^\varepsilon {\rm e}^{i z t} \left(\int_0^t g(\tau ) \: {\rm d} \tau \right)'  \: {\rm d} t = - iz  \int_{0}^\varepsilon {\rm e}^{i z t} \int_0^t g(\tau ) \: {\rm d} \tau  \: {\rm d} t.
$$
We obtain that $q(t) =  - i \int_0^t g(\tau ) \: {\rm d} \tau$ and thus $q(t)\in {W}^{1,2}(0,\varepsilon)$ and $q(0)=q(\varepsilon)=0$.
We have also shown that $iz Q(z) =- \int_{0}^\varepsilon {\rm e}^{i z t} q'(t) \: {\rm d} t$.

Finally we apply the inverse Fourier transform to $D(z)Q(z)$:
$$
f(t)=\frac{1}{2\pi}\int_{-\infty}^{\infty} {\rm e}^{-i \xi t} D(\xi) Q(\xi) {\rm d} \xi = 
\frac{1}{2\pi}\int_{-\infty}^{\infty} {\rm e}^{-i \xi t} a_1Q(\xi) {\rm d} \xi - \frac{1}{2\pi}\int_{-\infty}^{\infty} {\rm e}^{-i \xi (t-1)} i\xi Q(\xi) {\rm d} \xi 
$$
and this gives us the representation (\ref{eq_characteristic_eorth_simplest}).
\end{proof}

\begin{thm}\label{lem_characteristic_simplest}
For any initial condition $(y, x_0)\in M^2$ and any $T>1$ the minimum energy control of the equation (\ref{eq_intro_equation_simplest}) belongs to the characteristic space:
 $$\widehat{u}_T(T-t; y,x_0)\in E_T.$$
\end{thm}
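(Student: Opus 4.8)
The plan is to prove the inclusion by duality: since $E_T$ is closed, it suffices to show that $\widehat{u}_T(T-\cdot;y,x_0)$ annihilates every element of $(E_T)^\perp$, and that space is completely described by Proposition~\ref{prop_characteristic_simplest}. So I would fix $f\in(E_T)^\perp$ with associated $q\in W^{1,2}(0,\varepsilon)$, $q(0)=q(\varepsilon)=0$, as in~(\ref{eq_characteristic_eorth_simplest}), and compute $\langle \widehat{u}_T(T-\cdot),f\rangle_{L^2(0,T)}$. The substitution $s=T-t$ turns this into $\int_0^T \widehat{u}_T(s)\,f(T-s)\,{\rm d}s$, and the three-piece formula for $f$ shows that $f(T-s)$ equals $q'(\varepsilon-s)$ on $[0,\varepsilon]$, vanishes on $[\varepsilon,1]$, and equals $a_1 q(\varepsilon-(s-1))$ on $[1,1+\varepsilon]$. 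Hence only the restrictions of the optimal control to $[0,\varepsilon]$ and to $[1,1+\varepsilon]$ survive the pairing.

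On the first interval $\widehat{u}_T=\widehat{u}_0$, while on $[1,1+\varepsilon]$ the keep-at-zero part of~(\ref{eq_admissible_control_simplest}) gives $\widehat{u}_T(s)=-a_1 x(s-1)$, where $x$ is the optimal trajectory. After the shift $\sigma=s-1$ the pairing collapses to a single integral over $[0,\varepsilon]$:
\[
\langle \widehat{u}_T(T-\cdot),f\rangle_{L^2(0,T)}
=\int_0^\varepsilon \widehat{u}_0(s)\,q'(\varepsilon-s)\,{\rm d}s
-a_1^2\int_0^\varepsilon x(\sigma)\,q(\varepsilon-\sigma)\,{\rm d}\sigma .
\]

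The key ingredient, which makes the two terms cancel, is the differential identity $\widehat{u}_0'(t)=a_1^2\,x(t)$ on $[0,\varepsilon]$. I would obtain it by differentiating the Volterra equation~(\ref{eq_min_thmeq2_simplest}) once, which yields $\widehat{u}_0'(t)=a_1^2\int_0^t\widehat{u}_0(s)\,{\rm d}s+a_1^2\widetilde{x}(t)$, and then recognising $\widetilde{x}(t)+\int_0^t\widehat{u}_0=x(t)$ from~(\ref{eq_admissible_trajectory_simplest}); alternatively it can be read off directly from the closed form~(\ref{eq_min_optimalu_simplest}). Substituting $a_1^2 x=\widehat{u}_0'$ into the second integral and integrating by parts, the surviving boundary term is $-[\widehat{u}_0(s)\,q(\varepsilon-s)]_0^\varepsilon$, which vanishes precisely because $q(0)=q(\varepsilon)=0$, while the remaining integral cancels the first term. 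Thus $\langle\widehat{u}_T(T-\cdot),f\rangle=0$, and since $f\in(E_T)^\perp$ was arbitrary this proves $\widehat{u}_T(T-\cdot;y,x_0)\in E_T$.

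The change of variables and the integration by parts are routine; the one place that requires care, and the real content of the argument, is the optimality relation $\widehat{u}_0'=a_1^2 x$, which couples the optimal control on $[0,\varepsilon]$ with the tail of the control on $[1,1+\varepsilon]$ (both governed by $x$) and so matches the two pieces of the annihilator $f$ to produce the cancellation. One should also confirm that the block $[\varepsilon,1]$ contributes nothing, which is immediate since $f$ vanishes there after the time reversal. The same scheme --- describe $(E_T)^\perp$ through the characteristic function, reverse time, and combine the optimality relation with the endpoint conditions on $q$ --- is what I would reuse for the general equation~(\ref{eq_intro_equation_neutral}).
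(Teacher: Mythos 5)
Your proposal is correct and follows essentially the same route as the paper: duality against the characterization of $(E_T)^\perp$ from Proposition~\ref{prop_characteristic_simplest}, reversal of time, and the optimality equation for $\widehat{u}_0$, with the middle block $[\varepsilon,1]$ dropping out for the same reason. The only difference is cosmetic: the paper keeps the Volterra equation (\ref{eq_min_thmeq_simplest}) undifferentiated, so that after moving derivatives onto $q$ the integrand collapses to the constant $\alpha$ and vanishes against $q'(\varepsilon-t)$ via $q(0)=q(\varepsilon)=0$, whereas you differentiate it to the pointwise identity $\widehat{u}_0'=a_1^2 x$ and integrate by parts once, letting the endpoint conditions on $q$ enter through the boundary term instead.
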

\begin{proof}
For a given $(y, x_0)\in M^2$ the optimal control is of the form (\ref{eq_admissible_control_simplest}) where $\widehat{u}_0(t)$ is given by (\ref{eq_min_optimalu_simplest}).
Thus we obtain
$$
\widehat{u}_T(T-t; y,x_0)=\left\{
\begin{array}{ll}
-a_1 \widetilde{x}(\varepsilon-t) - a_1 \int\limits_0^{\varepsilon - t} \widehat{u}_0(\tau) \: {\rm d}\tau, & t\in [0,\varepsilon),\\
-a_1 x_0(\varepsilon - t), & t\in [\varepsilon,1),\\
\widehat{u}_0(1+\varepsilon - t), & t\in [1, 1+\varepsilon).
\end{array}
\right.
$$
For an arbitrary $f(t)\in (E_T)^\perp$, taking into account Proposition \ref{prop_characteristic_simplest}, we have:
$$
\begin{array}{l}
\langle \widehat{u}_T(T-t; y,x_0), f(t) \rangle_{L^2(0,T)}   \\
 =-\int\limits_0^\varepsilon \left( a_1\widetilde{x}(\varepsilon-t) + a_1 \int\limits_0^{\varepsilon-t} \widehat{u}_0(\tau) \: {\rm d}\tau \right) a_1 q(t)\:{\rm d} t 
 +\int\limits_r^{r+\varepsilon} \widehat{u}_0(r+\varepsilon - t) q'(t-r)\:{\rm d} t\\
 =\int\limits_0^\varepsilon \left( \int\limits_t^\varepsilon \left( \widetilde{x}(\tau) 
 + \int\limits_0^{\tau} \widehat{u}_0(s) \: {\rm d}s \right) \: {\rm d}\tau \right)' 
  a_1^2 q(\varepsilon-t)\:{\rm d} t + 
 \int\limits_0^{\varepsilon} \widehat{u}_0(t) q'(\varepsilon - t)\:{\rm d} t\\
=\int\limits_0^\varepsilon \left(  \widehat{u}_0(s) 
+ a_1^2  \int\limits_t^\varepsilon \int\limits_0^{\tau} \widehat{u}_0(s) \: {\rm d}s \: {\rm d}\tau 
+ a_1^2 \int\limits_t^\varepsilon \widetilde{x}(\tau) \: {\rm d}\tau\right)
q'(\varepsilon-t)\:{\rm d} t .
\end{array}
$$
The function $\widehat{u}_0(t)$ satisfies the integral equation (\ref{eq_min_thmeq_simplest}), thus
$$
\left\langle \widehat{u}_T(T-t; y,x_0), f(t) \right\rangle_{L^2(0,T)} = 
\int_0^\varepsilon \alpha q'(\varepsilon - t)\:{\rm d} t = 0.
$$
\end{proof}

\begin{rem} 
The optimal control has the structure:
$$
\widehat{u}_T(T-t)  = \left\{
\begin{array}{rl}
\frac{1}{a_1} w'(t), & t\in [0,\varepsilon],\\
*, & t\in [\varepsilon, 1],\\
w(t-1), & t\in [1,1+\varepsilon],
\end{array}
\right. 
$$
where $w(t) = \widehat{u}_0(\varepsilon - t)$.
\end{rem}

\begin{rem} 
$g(t)\in E_T$ if and only if there exists $w(t)\in {W}^{1,2}(0,\varepsilon)$ such that 
\begin{equation}\label{eq_characteristic_remeq_simplest}
 g(t) = \left\{
\begin{array}{rl}
\frac{1}{a_1} w'(t), & t\in [0,\varepsilon],\\
*, & t\in [\varepsilon, 1],\\
w(t-1), & t\in [1,1+\varepsilon].
\end{array}
\right.
\end{equation}
\end{rem}
Indeed, if $w_k(t) = {\rm e}^{i z_k (t+1)}$ we have $\frac{1}{a_1} w_k'(t) = {\rm e}^{i z_k t}$ and $w_k(t-1) = {\rm e}^{i z_k t}$,
thus ${\rm e}^{i z_k t}$ is of the form (\ref{eq_characteristic_remeq_simplest}).
On the other hand any function of the form (\ref{eq_characteristic_remeq_simplest}) is orthogonal to $(E_T)^\perp$.

\paragraph{Neutral equations of the general form.}

The characteristic function $D(z)$ of the equation (\ref{eq_intro_equation_neutral}) is given by (\ref{eq_intro_characteristic}). 
We assume for simplicity that $D$ does not have multiple roots.
The system of exponentials $\left\{{\rm e}^{i z_k t} \right\}_{k\in\mathbb{Z}}$, constructed by the zeros $\{z_k\}_{k\in\mathbb{Z}}$ of $D(z)$,
is minimal and of infinite deficiency in the space $L^2(0,T)$, $T=1+\varepsilon$.
We note that  $x(t; 1, {\rm e}^{i z_k t}, 0) = {\rm e}^{i z_k t}$.
The subspace $E_T$ given by~(\ref{eq_intro_ET}) is the closure of the linear span of the exponentials.
The orthogonal complement $(E_T)^\perp$ allows the following characterization.

\begin{prop}\label{prop_characteristic_neutral}
Function $f(t)\in (E_T)^\perp$ if and only if there exists $q(t)\in {W}^{1,2}(0,\varepsilon)$,  $q(0)=q(\varepsilon)=0$ such that 
\begin{equation}\label{eq_characteristic_eorth_retarded}
f(t) = \left\{
\begin{array}{rl}
a_N q(t) + d_N q'(t), & t\in [0,\varepsilon]\\
0, & t\in [\varepsilon, 1-r_{N-1}]\\
a_{N-1} q(t-1+r_{N-1}) + d_{N-1} q'(t-1+r_{N-1}), & t\in [1-r_{N-1},1-r_{N-1}+\varepsilon]\\
0, & t\in [1-r_{N-1}+\varepsilon, 1-r_{N-2}]\\
\ldots & \\
a_{1} q(t-1+r_{1}) + d_{1} q'(t-1+r_{1}), & t\in [1-r_{1},1-r_{1}+\varepsilon]\\
0, & t\in [1-r_{1}+\varepsilon, 1]\\
a_{0} q(t-1) + q'(t-1), & t\in [1,1+\varepsilon]
\end{array}
\right. 
\end{equation}
\end{prop}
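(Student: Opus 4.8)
The plan is to mirror the proof of Proposition~\ref{prop_characteristic_simplest}, replacing the scalar characteristic function $D(z)=a_1-iz\,{\rm e}^{izr}$ by the full neutral characteristic function~(\ref{eq_intro_characteristic}). As before, let $PW_{(a,b)}$ denote the Paley--Wiener space, and for $f\in L^2(0,T)$ write $F(z)=\int_0^T{\rm e}^{izt}f(t)\,{\rm d}t\in PW_{(0,T)}$. The first claim to establish is the spectral factorization: $f\in(E_T)^\perp$ if and only if there exists $Q(z)\in PW_{(0,\varepsilon)}$ with $zQ(z)\in PW_{(0,\varepsilon)}$ such that $F(z)=D(z)Q(z)$. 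The forward direction uses that $D(z_k)=0$ for every zero $z_k$, so $D(z)Q(z)$ vanishes on $\Lambda$ and its inverse Fourier transform lies in $(E_T)^\perp$; the converse sets $Q(z)\equiv F(z)/D(z)$, which is entire since $F$ vanishes on all the zeros of $D$.

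The key analytic point, exactly as in the scalar case, is the membership $Q,\,zQ\in PW_{(0,\varepsilon)}$. Here I would use the structure of~(\ref{eq_intro_characteristic}): for neutral equations ($d_N\neq0$) the dominant term as $|z|\to\infty$ along the real axis is $iz\,{\rm e}^{iz}$, so $z/D(z)$ stays bounded on $\mathbb{R}$, which gives $F(x)/D(x)\in L^2(\mathbb{R})$ and $xF(x)/D(x)\in L^2(\mathbb{R})$. The Paley--Wiener exponential type must be checked to be exactly $\varepsilon=T-1$: the type of $F$ is $T$, while $D(z)$ has type $1$ (its indicator comes from the terms ${\rm e}^{iz}$ and the $a_0$-term of type $1$), so the quotient $Q$ has type $T-1=\varepsilon$ in the appropriate half-planes. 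As before one then writes $Q(z)=\int_0^\varepsilon{\rm e}^{izt}q(t)\,{\rm d}t$ with $q\in L^2(0,\varepsilon)$, and the condition $zQ\in PW_{(0,\varepsilon)}$ forces $q\in W^{1,2}(0,\varepsilon)$ with the boundary conditions $q(0)=q(\varepsilon)=0$, via the identical integration-by-parts argument (evaluating $zQ(z)=\int_0^\varepsilon{\rm e}^{izt}g(t)\,{\rm d}t$ at $z=0$ to get $\int_0^\varepsilon g=0$, hence $q(t)=-i\int_0^t g$, and $izQ(z)=-\int_0^\varepsilon{\rm e}^{izt}q'(t)\,{\rm d}t$).

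The final step is to invert the Fourier transform of $D(z)Q(z)$ termwise. Substituting~(\ref{eq_intro_characteristic}), $D(z)Q(z)$ is a sum of contributions $iz\,{\rm e}^{iz}Q(z)$, $d_k\,iz\,{\rm e}^{i(1-r_k)z}Q(z)$, and $-a_k\,{\rm e}^{i(1-r_k)z}Q(z)$. Each multiplier ${\rm e}^{i(1-r_k)z}$ produces a shift by $1-r_k$ under inverse Fourier transform, while the factor $iz$ converts $q$ into $-q'$ (using the relation just derived). Collecting the pieces shifted to the window $[1-r_k,\,1-r_k+\varepsilon]$ yields precisely the block $a_kq(t-1+r_k)+d_kq'(t-1+r_k)$ on that interval, with the top block ($k=N$, $r_N=1$) landing on $[0,\varepsilon]$ as $a_Nq(t)+d_Nq'(t)$ and the $a_0$-block ($r_0=0$) on $[1,1+\varepsilon]$ as $a_0q(t-1)+q'(t-1)$, and zeros in the gaps because $q$ is supported on $[0,\varepsilon]$. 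Since $0<\varepsilon<r_1$, the intervals $[1-r_k,\,1-r_k+\varepsilon]$ are pairwise disjoint, so the gap segments are genuinely zero and the representation~(\ref{eq_characteristic_eorth_retarded}) follows. The main obstacle I anticipate is the careful bookkeeping of the supports and the exponential type in the neutral case: one must verify that $z/D(z)$ is bounded on the real line even when the zeros approach a horizontal strip, and that the disjointness of the shifted windows (guaranteed by $\varepsilon<r_1$) produces exactly the claimed zero blocks without overlap.
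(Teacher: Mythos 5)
Your proposal is correct and follows essentially the same route as the paper's own proof: the factorization $F(z)=D(z)Q(z)$ with $Q,\,zQ\in PW_{(0,\varepsilon)}$, the identification $q\in W^{1,2}(0,\varepsilon)$ with $q(0)=q(\varepsilon)=0$ via the $z=0$ integration-by-parts argument, and the termwise inverse Fourier transform of $D(z)Q(z)$ producing the shifted blocks. In fact the paper's proof is exactly this reduction to Proposition~\ref{prop_characteristic_simplest}, and you supply some bookkeeping (exponential type $T-1=\varepsilon$, disjointness of the windows $[1-r_k,1-r_k+\varepsilon]$ from $\varepsilon<r_1$) that the paper leaves implicit.
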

\begin{proof} The proof is similar to Proposition~\ref{prop_characteristic_simplest}.
We denote $F(z)=\int_{0}^T {\rm e}^{i z t} f(t)  \: {\rm d} t \in PW_{(0,T)}$ and $f(t)\in (E_T)^\perp$ if and only if there exists $Q(z)\in PW_{(0,\varepsilon)}$ such that $zQ(z)\in PW_{(0,\varepsilon)}$ and 
$
F(z)=D(z)Q(z).
$
By Paley-Wiener theorem there exists $q(t)\in L^2(0, \varepsilon)$ such that $Q(z)=\int_{0}^\varepsilon {\rm e}^{i z t} q(t) \: {\rm d} t$ and $q(t)$ is such that
$q(t)\in {W}^{1,2}(0,\varepsilon)$, $q(0)=q(\varepsilon)=0$.
Applying the inverse Fourier transform to $D(z)Q(z)$ we obtain
$$
\begin{array}{rcl}
f(t)& = & \frac{1}{2\pi}\int\limits_{-\infty}^{\infty} {\rm e}^{-i \xi t} D(\xi) Q(\xi) {\rm d} \xi \\
& = & \frac{1}{2\pi}\sum\limits_{k=0}^N a_k \int\limits_{-\infty}^{\infty} {\rm e}^{-i \xi (t-1+r_k)} Q(\xi) {\rm d} \xi 
- \frac{1}{2\pi}\sum\limits_{k=1}^N d_k  \int\limits_{-\infty}^{\infty} {\rm e}^{-i \xi (t-1+r_k)} i\xi Q(\xi) {\rm d} \xi\\
&  &- \frac{1}{2\pi}\int\limits_{-\infty}^{\infty} {\rm e}^{-i \xi (t-1)} i\xi Q(\xi) {\rm d} \xi 
\end{array}
$$
what proves the representation (\ref{eq_characteristic_eorth_retarded}).
\end{proof}

\begin{thm}\label{lem_characteristic_retarded}
For any initial condition $(y, x_0)\in \widehat{M}^2$ (or $(y, x_0)\in M^2$ in the case of retarded equation) and for any $T>1$ the minimum energy control of the equation (\ref{eq_intro_equation_neutral}) belongs to the characteristic space:
$$\widehat{u}_T(T-t; y,x_0)\in E_T.$$
\end{thm}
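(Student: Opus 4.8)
The plan is to mirror the proof of Theorem~\ref{lem_characteristic_simplest}: I would show that the reflected optimal control is orthogonal to every $f\in(E_T)^\perp$, using the description of $(E_T)^\perp$ from Proposition~\ref{prop_characteristic_neutral}. Writing the optimal control in the explicit form~(\ref{eq_admissible_control2_neutral}) with the optimal generator $\widehat{u}_0$ of Theorem~\ref{thm_min_neutral} and applying the symmetric change of time $t\mapsto T-t$ ($T=1+\varepsilon$), the ``active'' pieces (the generator piece on $[0,\varepsilon)$ and the combined pieces on $[r_s,r_s+\varepsilon)$) are carried onto the intervals $[1-r_s,1-r_s+\varepsilon]$, $s=\overline{0,N}$, while the purely initial-state pieces $\psi_s$ land on the complementary intervals $(1-r_s+\varepsilon,1-r_{s-1}]$. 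The decisive structural observation is that these are exactly the intervals on which $f$ vanishes by~(\ref{eq_characteristic_eorth_retarded}); hence the $\psi_s$-pieces drop out of the pairing and $\langle\widehat{u}_T(T-\cdot),f\rangle$ collapses to a finite sum of integrals over $[0,\varepsilon]$, with the conventions $d_0=1$, $\varphi_0=0$ unifying the generator piece with the others.

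Next I would change variables in each summand to the common interval $[0,\varepsilon]$. Setting $Q(t)=q(\varepsilon-t)$, which satisfies $Q(0)=Q(\varepsilon)=0$, and writing $w_s(t)=\varphi_s(t)+D_s\widehat{u}_0(t)$ with $D_s h(t)=d_s h(t)+(d_sa_0-a_s)\int_0^t{\rm e}^{a_0(t-\tau)}h(\tau)\,{\rm d}\tau$ (so that $w_s$ is precisely the $s$-th summand whose square enters $\Phi$), the pairing takes the form
$$\langle\widehat{u}_T(T-\cdot),f\rangle=\sum_{s=0}^N\int_0^\varepsilon w_s(t)\,\bigl(a_sQ(t)-d_sQ'(t)\bigr)\,{\rm d}t.$$
The crux is then the test function $h(t)=a_0Q(t)-Q'(t)$ (equivalently $h(t)=a_0q(\varepsilon-t)+q'(\varepsilon-t)$). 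Using $Q(0)=0$ and integrating by parts one checks $\int_0^t{\rm e}^{a_0(t-\tau)}h(\tau)\,{\rm d}\tau=-Q(t)$, whence $D_sh=a_sQ-d_sQ'$ \emph{simultaneously for every} $s$. Passing to the adjoints $D_s^\ast$ this yields
$$\langle\widehat{u}_T(T-\cdot),f\rangle=\sum_{s=0}^N\langle w_s,D_sh\rangle=\Bigl\langle\sum_{s=0}^ND_s^\ast w_s,\,h\Bigr\rangle=\langle L_{\widehat{u}_0},h\rangle,$$
where $\sum_sD_s^\ast w_s$ is exactly the Fréchet derivative $L_{\widehat{u}_0}$ computed in the proof of Theorem~\ref{thm_min_neutral}, the anti-causal integrals $\int_t^\varepsilon$ there being produced by the adjoints.

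Finally I would invoke the optimality (tangency) relation~(\ref{eq_minl_thmeq_neutral0}), $L_{\widehat{u}_0}\equiv\alpha\,{\rm e}^{-a_0t}$, to conclude
$$\langle\widehat{u}_T(T-\cdot),f\rangle=\alpha\int_0^\varepsilon{\rm e}^{-a_0t}\bigl(a_0Q(t)-Q'(t)\bigr)\,{\rm d}t=-\alpha\bigl[{\rm e}^{-a_0t}Q(t)\bigr]_0^\varepsilon=0,$$
since ${\rm e}^{-a_0t}(a_0Q-Q')=-({\rm e}^{-a_0t}Q)'$ and $Q(0)=Q(\varepsilon)=0$. As $f\in(E_T)^\perp$ was arbitrary this gives $\widehat{u}_T(T-\cdot;y,x_0)\in E_T$. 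The retarded case ($d_k=0$, $k=\overline{1,N}$, $(y,x_0)\in M^2$) runs through the identical steps, now with $\widehat{u}_0$ and $w_s$ taken from Corollary~\ref{thm_min_retarded}.

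The main obstacle, and the only genuinely new point compared with Theorem~\ref{lem_characteristic_simplest}, is locating the single test function $h=a_0Q-Q'$ that inverts all the Volterra operators $D_s$ at once and simultaneously lies in the tangent direction of the moment hyperplane~(\ref{eq_admissible_moments_neutral}), i.e. satisfies $\int_0^\varepsilon{\rm e}^{-a_0t}h\,{\rm d}t=0$. In the one-delay equation $a_0=0$ this degenerates to $h=-Q'$ and the whole subtlety is invisible (the pairing reduces to $\int_0^\varepsilon\alpha\,Q'=0$); for the neutral equation the weight ${\rm e}^{-a_0t}$ means $\int{\rm e}^{-a_0t}Q'$ alone does not vanish, and the cancellation works only because each active interval of $f$ carries \emph{both} an $a_sq$ and a $d_sq'$ term, which recombine into the $a_0$-balanced combination annihilated by ${\rm e}^{-a_0t}$. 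Verifying the identity $D_sh=a_sQ-d_sQ'$ and that $\sum_sD_s^\ast w_s$ reassembles the six-term derivative $L_{\widehat{u}_0}$ are the two computations to carry out with care.
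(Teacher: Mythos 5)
Your proof is correct, and it reaches the paper's conclusion by the same overall strategy --- pairing the reflected optimal control against an arbitrary $f\in(E_T)^\perp$ via Proposition~\ref{prop_characteristic_neutral} and killing the result with the optimality relation~(\ref{eq_minl_thmeq_neutral0}) --- but your internal organization of the computation is genuinely different from the paper's. The paper proceeds by brute force: it splits the pairing into coefficient functions $T_1(t)$ (against $q'(\varepsilon-t)$) and $T_2(t)$ (against $q(\varepsilon-t)$), integrates the $T_2$-term by parts, identifies $T_1-\int_t^\varepsilon {\rm e}^{-a_0(t-\tau)}T_2(\tau)\,{\rm d}\tau$ with $G[\widehat{u}_0]$ plus remainder terms, and only after a second round of term-by-term integration by parts recombines everything into $a_0\int_0^\varepsilon\bigl(G[\widehat{u}_0](t)-\alpha{\rm e}^{-a_0t}\bigr)q(\varepsilon-t)\,{\rm d}t=0$. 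You instead package each active piece as $w_s=\varphi_s+D_s\widehat{u}_0$ and exhibit the single test function $h=a_0Q-Q'$ with the identity $D_sh=a_sQ-d_sQ'$ for all $s$ simultaneously (which I verified: $\int_0^t{\rm e}^{a_0(t-\tau)}h(\tau)\,{\rm d}\tau=-Q(t)$ holds because $Q(0)=q(\varepsilon)=0$), so that the pairing becomes $\bigl\langle\sum_s D_s^\ast w_s,h\bigr\rangle=\langle L_{\widehat{u}_0},h\rangle$; and indeed $\sum_{s=0}^N D_s^\ast w_s$ reproduces exactly the six-term Fr\'echet derivative displayed in the proof of Theorem~\ref{thm_min_neutral}. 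What your route buys: the mechanism of the cancellation is transparent ($h$ is a tangent direction of the moment hyperplane, $\int_0^\varepsilon{\rm e}^{-a_0t}h\,{\rm d}t=-\bigl[{\rm e}^{-a_0t}Q\bigr]_0^\varepsilon=0$), the two integrations by parts collapse into one algebraic identity, and the argument is uniform in $a_0$ --- including $a_0=0$, where the paper's final display degenerates to the trivial identity $0=0$ and one must re-inspect the preceding manipulations, whereas your exactness argument needs no case distinction. Your version is also internally more coherent than the paper's text, which in this proof cites the retarded control form~(\ref{eq_admissible_control_retarded}) while actually carrying out the neutral computation; you correctly use~(\ref{eq_admissible_control2_neutral}) with Theorem~\ref{thm_min_neutral} for the neutral case and Corollary~\ref{thm_min_retarded} for the retarded one.
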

\begin{proof}
For a given $(y, x_0)$ and $T$ the optimal control $\widehat{u}(t)=\widehat{u}_T(T-t; y,x_0)$ is of the form (\ref{eq_admissible_control_retarded}) where $\widehat{u}_0(t)$ is given by (\ref{eq_min_controlu_retarded})--(\ref{eq_min_constantc_retarded}).
Thus for $t\in [1-r_{k},1-r_{k}+\varepsilon]$, $k\in\overline{1,N}$:
$$
\widehat{u}(T-t)=\varphi_k(1+\varepsilon-r_{k}-t) + d_k u(1+\varepsilon-r_{k}-t) + (d_k a_0 - a_k) \int\limits_0^{1+\varepsilon-r_{k}-t} {\rm e}^{a_0(1+\varepsilon-r_{k}-t-\tau)} \widehat{u}_0(\tau) \: {\rm d}\tau
$$
where $\varphi_k(\cdot)$ are given by (\ref{eq_admissible_control_neutral_varphi}) and $$\widehat{u}(T-t)=\widehat{u}_0(1+\varepsilon - t), \qquad t\in [1, 1+\varepsilon).$$
For an arbitrary $f(t)\in (E_T)^\perp$, we calculate the product $\langle \widehat{u}(T-t), f(t) \rangle_{L^2(0,T)}$. Taking into account the representation of $f(t)$ due to Proposition~\ref{prop_characteristic_neutral}, and applying changes of variables, we obtain:
 $$
 \begin{array}{l}
 \langle \widehat{u}(T-t), f(t) \rangle_{L^2(0,T)} =  
 \int\limits_0^{\varepsilon} \widehat{u}_0(\varepsilon - t)  (a_0 q(t) + q'(t)) \:{\rm d} t + \\
 +  \int\limits_{0}^{\varepsilon} \sum\limits_{k=1}^N  \left(\varphi_k(\varepsilon - t) + d_k u(\varepsilon - t) 
 + (d_k a_0 - a_k) \int\limits_0^{\varepsilon - t} {\rm e}^{a_0(\varepsilon - t-\tau)} \widehat{u}_0(\tau) \: {\rm d}\tau \right)  
 (a_k q(t) + d_k q'(t))\:{\rm d} t \\
 = a_0 \int\limits_0^{\varepsilon} \widehat{u}_0(t) q(\varepsilon - t) \:{\rm d} t
 + \int\limits_0^{\varepsilon} T_1(t) q'(\varepsilon - t) \:{\rm d} t
 + \int\limits_0^{\varepsilon} T_2(t) q(\varepsilon - t) \:{\rm d} t,
 \end{array}
 $$
where
$$
\begin{array}{l}
T_1(t) = \widehat{u}_0(t) +  \sum\limits_{k=1}^N \left[ d_k\varphi_k(t) + d_k^2 u(t) 
+ d_k(d_k a_0 - a_k) \int\limits_0^{t} {\rm e}^{a_0(t-\tau)} \widehat{u}_0(\tau) \: {\rm d}\tau \right],\\
T_2(t) = \sum\limits_{k=1}^N \left[ a_k\varphi_k(t) + d_k a_k u(t) 
+ a_k(d_k a_0 - a_k) \int\limits_0^{t} {\rm e}^{a_0(t-\tau)} \widehat{u}_0(\tau) \: {\rm d}\tau \right].
\end{array}
$$
Integrating by parts the term with $T_2(t)$ we get
$$
\int_0^{\varepsilon} T_2(t) q(\varepsilon - t) \:{\rm d} t = 
-\int_0^{\varepsilon} \left(\int_t^\varepsilon {\rm e}^{-a_0(t-\tau)} T_2(\tau) \: {\rm d}\tau \right) (a_0 q(\varepsilon - t) + q'(\varepsilon - t)) \:{\rm d} t.
$$
Further, 
$$
\begin{array}{l}
T_1(t) - \int\limits_t^\varepsilon {\rm e}^{-a_0(t-\tau)} T_2(\tau) \: {\rm d}\tau = 
G[\widehat{u}_0](t) 
-\sum\limits_{k=1}^N d_k^2 a_0 \int\limits_t^\varepsilon {\rm e}^{-a_0(t-\tau)} \widehat{u}_0(\tau) \: {\rm d}\tau\\
- \sum\limits_{k=1}^N d_k a_0 \int\limits_t^\varepsilon {\rm e}^{-a_0(t-\tau)} \varphi_k(\tau) \: {\rm d}\tau
- \sum\limits_{k=1}^N d_k a_0 (d_k a_0 - a_k)^2 \int\limits_t^\varepsilon {\rm e}^{-a_0(t-\tau)} \int\limits_0^{\tau} {\rm e}^{a_0(\tau-s)} \widehat{u}_0(s) \: {\rm d}s \: {\rm d}\tau,
 \end{array}
$$
and since the optimal control $\widehat{u}_0(t)$ satisfies the integral equation (\ref{eq_minl_thmeq_neutral0}): $G[\widehat{u}_0](t) = \alpha {\rm e}^{-a_0 t}$ we rewrite
$$
\begin{array}{l}
\int\limits_0^{\varepsilon} \left(T_1(t) - \int\limits_t^\varepsilon {\rm e}^{-a_0(t-\tau)} T_2(\tau) \: {\rm d}\tau\right) q'(\varepsilon - t) \:{\rm d} t = \\
a_0\int\limits_0^{\varepsilon} \left[-\alpha {\rm e}^{-a_0 t} 
+ \sum\limits_{k=1}^N \left(d_k^2  \widehat{u}_0(t) + d_k \varphi_k(t)
+ d_k^2 a_0 \int\limits_t^\varepsilon {\rm e}^{-a_0(t-\tau)} \widehat{u}_0(\tau) \: {\rm d}\tau
+ d_k (d_k a_0 - a_k) \int\limits_0^t {\rm e}^{a_0(t-\tau)} \widehat{u}_0(\tau) \: {\rm d}\tau
\right.\right.\\
\left.\left.
+ d_k a_0 (d_k a_0 - a_k) \int\limits_t^\varepsilon {\rm e}^{-a_0(t-\tau)} \int\limits_0^{\tau} {\rm e}^{a_0(\tau-s)} \widehat{u}_0(s) \: {\rm d}s \: {\rm d}\tau 
+  d_k a_0 \int\limits_t^\varepsilon {\rm e}^{-a_0(t-\tau)} \varphi_k(\tau) \: {\rm d}\tau
\right)\right] q(\varepsilon - t)
\:{\rm d} t
\end{array}
$$

Finally, collecting all the terms together we get
$$
 \langle \widehat{u}(T-t), f(t) \rangle_{L^2(0,T)}  
=a_0\int\limits_0^\varepsilon \left(G[\widehat{u}_0](t) - \alpha {\rm e}^{-a_0 t} \right) q(\varepsilon - t)  \:{\rm d} t = 0
$$
due to (\ref{eq_minl_thmeq_neutral0}), and thus $\widehat{u}(T-t)\in E_T$.
\end{proof}


\subsection*{Acknowledgments}

The author is warmly grateful to Yurii Lyubarskii for helpful discussions.


\bibliography{bib-storage}

\vskip7mm 
\noindent
{\bf Pavel Barkhayev} \\
Department of Mathematical Sciences, Norwegian University of Science and Technology, 7491, Trondheim, Norway  and \\
B.Verkin Institute for Low Temperature Physics and Engineering of the National Academy of Sciences of Ukraine, 47 Nauki Ave., 61103 Kharkiv, Ukraine \\
E-mail: {\em pavloba@ntnu.no}.

\end{document}